\title[Steady periodic waves with constant vorticity]{\bf Steady periodic water waves with constant vorticity:
 regularity and local bifurcation }
\author{Adrian Constantin and Eugen Varvaruca}\date{}
\newtheorem{theorem}{Theorem}[section]
\newtheorem{lemma}[theorem]{Lemma}
\newtheorem{remark}[theorem]{Remark}
\numberwithin{equation}{section}
\newcommand{\Om}{\Omega}
\newcommand{\om}{\omega}
\newcommand{\la}{\lambda}
\newcommand{\be}{\begin{equation}}
\newcommand{\ee}{\end{equation}}
\newcommand{\mcs}{{\mathcal S}}
\newcommand{\mcb}{{\mathcal B}}
\newcommand{\mcg}{{\mathcal G}}
\newcommand{\mcl}{{\mathcal L}}
\newcommand{\mcr}{\mathcal{R}}
\newcommand{\mcx}{{\mathbb X}}
\newcommand{\mcy}{{\mathbb Y}}
\newcommand{\mck}{{\mathcal K}}
\newcommand{\pa}{\partial}
\newcommand{\ze}{\zeta}
\newcommand{\bdc}{\mathbb{C}}
\newcommand{\bdd}{\mathbb{D}}
\newcommand{\bdr}{\mathbb{R}}
\newcommand{\bdz}{\mathbb{Z}}
\newcommand{\mcc}{\mathcal{C}}
\newcommand{\al}{\alpha}
\newcommand{\veps}{\varepsilon}
\newcommand{\bese}{\begin{subequations}}
\newcommand{\ese}{\end{subequations}}
\newcommand{\non}{\nonumber}
\begin{document}
\begin{abstract}
This paper studies periodic traveling gravity waves at the free
surface of water in a flow of constant vorticity over a flat bed.
Using conformal mappings the free-boundary problem is transformed
into a quasilinear pseudodifferential equation for a periodic
function of one variable. The new formulation leads to a regularity
result and, by use of bifurcation theory, to the existence of waves
of small amplitude even in the presence of stagnation points in the
flow.
\end{abstract}

\maketitle

{\small \noindent {\it Keywords and Phrases}: water waves,
vorticity, conformal mapping, Hilbert transform.

\noindent {\it AMS Mathematics Subject Classification (2000)}:
76B15, 35J25, 30E25, 35Q35, 35R35.}

\section{Introduction}

The problem of spatially periodic traveling gravity water waves with
constant vorticity $\gamma$ on a flow of finite depth can be
 formulated as the free-boundary problem of finding
\begin{itemize}
\item a domain $\Om$ in $(X,Y)$-plane, whose boundary consists of the
real axis
 \begin{subequations}\label{apb}
 \be\mcb=\{(X,0):X\in\bdr\},\ee representing the flat
impermeable water bed, and an a priori unknown curve expressed in
parametric form as \be\mcs=\{(u(s),v(s)):s\in\bdr\},\label{spar}\ee
with \be u(s+L)=u(s)+L,\quad v(s+L)=v(s)\quad\text{for all
}s\in\bdr,\label{lper}\ee representing the free surface of the
water, which is $L$-periodic in the horizontal direction;
\item a function $(X,Y)\mapsto\psi(X,Y)$ which is $L$-periodic in $X$ throughout
$\Om$, representing the stream function giving the velocity field
$(\psi_Y,-\psi_X)$ in a frame moving at the constant wave speed,
which satisfies the following equations and boundary conditions:
\begin{align}
& \Delta\psi=-\gamma\quad\text{in }\Om,\label{ap0}\\
& \psi=-m\quad \text{on }\mcb,\label{ap1}\\&\psi=0\quad \text{on
}\mcs, \label{ap2}\\&\vert\nabla\psi\vert^{2}+2gY=Q \quad\text{on }
\mcs.\label{ap3}
 \end{align}
\end{subequations}
\end{itemize}
 Here $g$ is the gravitational constant of acceleration, the
constant $m$ is the relative mass flux, while the constant $Q$ is
related to the hydraulic head (see the discussion in \cite{CS}). The
level sets of $\psi$ are the streamlines, while a point where the
gradient of $\psi$ vanishes is called a \emph{stagnation point}.

We are interested in the existence, regularity and geometric
properties  of solutions of (\ref{apb}). The physical motivation for
our study lies in the desire to understand wave-current
interactions, since nonzero vorticity is the hallmark of a
non-uniform underlying current \cite{CE, CS}. Even in the simplest
case of a linear current, which corresponds to a constant vorticity
$\gamma$ in (\ref{apb}), numerical studies \cite{DP, KSt, KSt2, OS}
point to the existence of large-amplitude waves with overhanging
profiles and critical layers (regions in the fluid bounded by closed
streamlines and containing stagnation points). This is in striking
contrast to the extensively studied irrotational case
\cite{BT,ST,T}, for which any wave profile is necessarily the graph
of a function and there can be no critical layers in the fluid
\cite{T1,V,V2}. The assumption of an underlying current with nonzero
constant vorticity (or linear shear) provides us with the simplest
case of a flow that is not irrotational and is attractive for
analytical tractability. However, this setting is not a mere
mathematical convenience, being physically relevant. Indeed, on
areas of the continental shelf and in many coastal inlets the most
significant currents are the tides and tidal flows are the most
regular and predictable currents \cite{Jo, Sw}, considered to be
two-dimensional flows of constant vorticity: negative constant
vorticity is generated when the flow of the current is directed
towards the shore (flood) and positive constant vorticity when the
current is directed back out to sea (ebb) \cite{DP, Ya}.

This paper addresses some of  the mathematical challenges posed by
the study of solutions of (\ref{apb}) with overhanging profiles or
critical layers. In all previous studies of existence of solutions
of (\ref{apb}), equivalent formulations of the problem over a fixed
domain are essential. Bifurcation theory is then the most successful
tool used for existence results. For irrotational flows ($\gamma=0$,
representing uniform flows without vorticity), a hodograph
transformation maps the unknown domain occupied by the water into a
fixed horizontal strip in a complex plane where the variable is the
complex potential of the fluid flow. In these coordinates, the
classical approach is to reformulate (\ref{apb}) as a nonlinear
singular integral equation for a function of one variable which
gives the angle of inclination between the tangent to the free
surface and the horizontal \cite{AT, KN, T}. A more recent approach
\cite{BDT1, BDT2, ST}  (for the analogous problem of waves of
infinite depth) uses a reformulation of the problem as a
pseudodifferential equation for a function of one variable which
gives the elevation of the free surface when the fluid domain is the
conformal image of a strip \cite{Ba}.

In this paper we show that the second approach can be extended to
the case of any constant vorticity $\gamma$. The basic idea is to
seek the fluid domain as the conformal image of a strip, regardless
of the unavailability of a complex potential. The new formulation is
presented in Section 2. It imposes no restriction on the geometry of
the free surface or on the streamline pattern, and in particular it
can handle overhanging profiles and critical layers. In order to
avoid distracting technicalities, we assume at the outset a modest
amount of regularity of the free surface, namely $C^{1,\alpha}$ for
some $\alpha\in (0,1)$, though arguing along the lines of \cite{ST,
V2} one could verify the equivalence of the formulations also in the
framework of weak solutions \cite{V1}. As the new formulation
involves the periodic Dirichlet-Neumann operator and the periodic
Hilbert transform associated to a strip,  Section 3 studies some
useful properties of these operators. Section 4 is devoted to
regularity of solutions of the new equation, showing in particular
that any $C^{1, \alpha}$ free surface with no stagnation points is
actually $C^\infty$. In Section 5 we prove the existence of waves of
small amplitude using bifurcation theory. The analysis reveals the
existence of such waves with a critical layer, a fact first proved
rigorously only very recently in \cite{W}, though anticipated by a
formal argument of Kelvin \cite{Ke}, who presented the famous cat's
eye picture of the streamline pattern.

Before proceeding with the bulk of the paper we would like to
comment upon the relation to some recent investigations on water
waves with vorticity. For general vorticity but under the assumption
that $\psi$ is monotone in the vertical direction, a hodograph
transform  $(X,Y)\mapsto (X,\psi)$ was used in \cite{CS} to
transform (\ref{apb}) into a nonlinear boundary value problem for a
quasilinear elliptic equation in a strip, which permits the
investigation of waves of small and large amplitude (see also
\cite{CEW, CS3, V3, V1}). Thus the formulation in \cite{CS} cannot
describe neither critical layers nor overhanging profiles. The
recent paper \cite{W} deals with waves of small amplitude in a flow
of constant vorticity which could contain stagnation points. The
approach of \cite{W} cannot handle overhanging profiles. For a free
surface of the form $\mcs=\{(X,\eta(X)):X\in\bdr\}$, it reformulates
(\ref{apb}) as a nonlinear boundary problem for an elliptic equation
in a strip by performing the flattening change of variables $(X,Y)
\mapsto (X,\yen)=\Big(X,\displaystyle\frac{Y}{\eta(X)} \Big)$. The
free surface is thus transformed into the horizontal line $\yen=1$
and, in the new variables, one has to solve a system of equations
for a function of two variables and one of one variable. Our
analysis of small amplitude waves is much simpler than that in
 \cite{W}. The new formulation, which has an elegant structure,
  opens up the possibility of using global bifurcation theory to
 prove the existence of waves of large
 amplitude with critical layers and/or overhanging profiles, a prospect which
 seems daunting for the formulation in \cite{W}. In addition, our
 approach has the advantage that in the reformulation we do not
depart from harmonic function theory, and the fine structural
properties associated with harmonic functions continue to be
relevant. Such properties made it possible to describe the particle
motion beneath irrotational waves \cite{C, CS2} and the exploration
of this aspect for flows with constant vorticity is now plausible.

\section{Reformulation of the free-boundary problem}

In this section we present the reformulation of the free-boundary
problem (\ref{apb}) as the quasilinear pseudodifferential equation
(\ref{maineq}) for a periodic function of one variable. This
involves the periodic Dirichlet-Neumann operator and the periodic
Hilbert transform for a strip, so we start with a discussion of the
definitions and basic properties of these operators.

For any integer $p \ge 0$ and $\alpha\in (0,1)$ we denote by
$C^{p,\alpha}$ the standard space of functions whose partial
derivatives up to order $p$ are H\"{o}lder continuous with exponent
$\alpha$ over their domain of definition. By
$C^{p,\al}_{\textnormal{loc}}$ we denote the set of functions of
class $C^{p,\alpha}$ over any compact subset of their domain of
definition. We will need the following local version of Privalov's
Theorem \cite[Chapter V, \S E, p.\ 100]{Ko} (see \cite[Lemma
2.2]{EV} for the proof of a very similar result). A more refined,
global version of Privalov's Theorem, related to but different from
that in \cite{Ko}, will be proved in Appendix B.

\begin{lemma}\label{lreg} For any $t>0$,
we denote $\bdd^+_t=\{(x,y)\in\bdr^2:\, x^2+y^2<t^2,\,y>0\}$. Let
$r>0$ and $Z+iW$ be a holomorphic function in $\bdd^+_r$, where $Z$
and $W$ are real-valued functions. Suppose that $W$ is continuous in
$\bdd^+_r\cup \{(x,0):x\in(-r,r)\}$, and let
\[ w(x)=W(x,0),\qquad x\in(-r,r).\]
 If $w\in C^{p,\al}_{\textnormal{loc}}((-r,r))$ for some integer
$p\geq 0$ and $\alpha\in (0,1)$, then $W$ and $Z$ are of class
$C^{p,\alpha}$ in the closure of $\bdd_t^+(x_0)$ for every $t\in
(0,r)$.
\end{lemma}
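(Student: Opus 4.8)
The plan is to exploit the local structure of holomorphic functions near a boundary arc, reducing to the classical (flat-boundary) Privalov theorem by a Schwarz reflection argument together with the Poisson representation. First I would fix $t \in (0,r)$ and choose an intermediate radius $\rho$ with $t < \rho < r$. On the smaller half-disc I want to decompose $W$ as the sum of an explicit Poisson-type potential built from the boundary data $w$ on $(-\rho,\rho)$ and a harmonic function that is smooth up to the flat part of the boundary. Concretely, let $\eta \in C^\infty_c((-\rho,\rho))$ be a cutoff equal to $1$ on $(-t',t')$ for some $t < t' < \rho$, and form the single-layer/Poisson integral $W_1(x,y) = \frac{1}{\pi}\int_{-\rho}^{\rho} \frac{y}{(x-\xi)^2 + y^2}\,\eta(\xi)w(\xi)\,d\xi$, which is harmonic in the upper half-plane and whose boundary trace on $(-t',t')$ equals $w$. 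The remainder $W_2 = W - W_1$ is then harmonic in $\bdd^+_\rho$, continuous up to $(-t',t')$, with boundary trace $0$ there; by Schwarz reflection $W_2$ extends harmonically across $(-t',t')$ and is therefore real-analytic, in particular $C^{p,\alpha}$, on the closure of $\bdd^+_t(x_0)$ for the relevant centres $x_0$.

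The substance of the argument is then the regularity of $W_1$ and of its harmonic conjugate up to the boundary. Since $\eta w \in C^{p,\alpha}(\bdr)$ with compact support, the classical local Privalov theorem (as in \cite[Chapter V, \S E]{Ko}, or \cite[Lemma 2.2]{EV}) applies directly on the flat boundary: the Poisson extension $W_1$ of a $C^{p,\alpha}$ datum is $C^{p,\alpha}$ up to $\{y=0\}$, and its harmonic conjugate $Z_1$ — which is given up to a constant by the Hilbert transform of $\eta w$, hence by the convolution of $\eta w$ with the (principal-value) kernel $1/(\pi x)$ — is likewise $C^{p,\alpha}$ up to the boundary, because the Hilbert transform preserves $C^{p,\alpha}$ by Privalov's theorem. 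Adding back the smooth contribution from $W_2$ and its conjugate $Z_2$, and using that $Z$ and $Z_1 + Z_2$ differ by a locally bounded holomorphic function on $\bdd^+_\rho$ (the conjugate is determined up to an additive constant once one fixes a base point, and both are harmonic conjugates of the same harmonic function $W$ on the simply connected set $\bdd^+_\rho$), we conclude that both $Z$ and $W$ are $C^{p,\alpha}$ on $\overline{\bdd^+_t(x_0)}$.

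The main obstacle I anticipate is the bookkeeping near the \emph{endpoints} of the arc $(-r,r)$: the cutoff $\eta$ is precisely what localises the argument away from $\pm\rho$, but one must check that for every centre $x_0$ with $\overline{\bdd^+_t(x_0)} \subset \bdd^+_r \cup \{(x,0): x\in(-r,r)\}$ one can choose $\rho, t'$ so that $\overline{\bdd^+_t(x_0)}$ sits inside the region where $\eta \equiv 1$ and $W_2$ is genuinely harmonic up to the boundary; this is a compactness argument but needs to be stated carefully. A secondary technical point is that the hypothesis only gives $w \in C^{p,\alpha}_{\textnormal{loc}}$, so $\eta w$ is $C^{p,\alpha}$ globally only after multiplying by the compactly supported cutoff — this is fine, but one should record that the $C^{p,\alpha}$ norm of $\eta w$ (hence the resulting estimates) depends on the chosen compact subinterval, which is exactly why the conclusion is local in $t$. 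Everything else is a routine application of the flat-boundary Privalov theorem and Schwarz reflection.
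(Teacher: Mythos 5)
Your argument is correct, and it is worth noting that the paper itself does not prove this lemma at all: it is quoted with references to Koosis \cite{Ko} and to \cite[Lemma 2.2]{EV}, so there is no in-paper proof to compare against. Your cutoff-plus-reflection reduction is exactly the standard route one would expect those references to follow: write $W=W_1+W_2$ with $W_1$ the half-plane Poisson integral of $\eta w$ (a globally $C^{p,\alpha}$, compactly supported datum), invoke the flat-boundary Privalov theorem for $W_1$ and its conjugate $Z_1$ (for $p\geq 1$ one can alternatively bootstrap via the Cauchy--Riemann equations, but for $p=0$ the H\"older continuity of $Z_1$ up to $\{y=0\}$ is precisely the classical conjugate-function theorem, which is legitimately used as a black box here), and dispose of $W_2=W-W_1$ by odd reflection: since $W_2$ is harmonic in $\bdd^+_{t'}$, continuous up to $(-t',t')$ and vanishing there, it extends harmonically to the full disc $\bdd_{t'}$, hence is real-analytic on a neighbourhood of $\overline{\bdd^+_t}$, and a conjugate $Z_2$ exists on that disc because it is simply connected. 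Two small points of bookkeeping: the comparison of conjugates is cleaner than you state it, since $(Z+iW)-(Z_1+iW_1)-(Z_2+iW_2)$ is holomorphic on the connected set $\bdd^+_{t'}$ with identically vanishing imaginary part, so $Z-Z_1-Z_2$ is a real \emph{constant}, not merely a locally bounded holomorphic function; and the endpoint worry you raise is harmless because for fixed $t<r$ one simply chooses $t<t'<\rho<r$ once and for all (the $x_0$ in the statement of the lemma is a typographical leftover in the paper, the half-discs being centred at the origin), so that $\overline{\bdd^+_t}$ is a compact subset of $\bdd_{t'}$, which is all the compactness one needs. With these clarifications the proposal is a complete and correct proof.
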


For any $d>0$, let $\mcr_d$ be the strip
\[\mcr_d=\{(x,y)\in\bdr^2:-d<y<0\}.\] For any integer $p \ge 0$ and $\alpha\in
(0,1)$ we denote by $C^{p,\alpha}_{2\pi}$ the space of functions of
one real variable which are $2\pi$-periodic and of class
$C^{p,\alpha}$. For any $w\in C^{p,\alpha}_{2\pi}$, let $W\in
C^{p,\alpha}(\overline \mcr_d)$ be the unique solution of
\begin{align}\label{wdn}
&\Delta W=0 \qquad\text{in }\mcr_d,\\
&W(x,-d)=0,\qquad x\in\bdr,\non\\ &W(x,0)=w(x),\qquad
x\in\bdr\non.\end{align} The function $(x,y)\mapsto W(x,y)$ is
$2\pi$-periodic in $x$ throughout $\mcr_d$. For $p\ge 1$ integer, we
define $\mcg_d(w)$ by
\[\mathcal{G}_d(w)(x)=W_y(x,0),\qquad x\in\bdr.\]
(Throughout the paper, the subscripts $x$, $y$, $X$, $Y$ denote
partial derivatives, and are the only subscripts with this meaning.)
Then $\mathcal{G}_d(w)\in C^{p-1,\alpha}_{2\pi}$. The mapping
$w\mapsto \mcg_d(w)$ is called the \emph{periodic Dirichlet-Neumann
operator for a strip}, and is a (bounded) linear operator from
$C^{p,\alpha}_{2\pi}$ to $ C^{p-1,\alpha}_{2\pi}$. Note in
particular that if $w$ is a constant function taking the value $c$,
then
\[ W(x,y)= \frac{c}{d}(y+d), \quad(x,y)\in\mcr_d,\]
and hence \be \mcg_d(c)=c/d.\label{dnc}\ee Suppose now that $Z$ is a
harmonic function in $\mcr_d$, uniquely determined up to a constant,
such that $Z+iW$ is holomorphic in $\mcr_d$. By Lemma \ref{lreg},
$Z\in C^{p,\alpha}(\overline \mcr_d)$. Note that the function
\[(x,y)\mapsto Z(x+2\pi,y)-Z(x,y)\]
is constant in $\mcr_d$, as can be seen by taking its partial
derivatives and using the Cauchy-Riemann equations. Let $K\in\bdr$
be such that \be Z(x+2\pi,y)-Z(x,y)=K,\qquad
(x,y)\in\mcr_d.\label{perz}\ee For every $x\in\bdr$ and $y\in (-d,
0)$, it follows using the Cauchy-Riemann equations that
\begin{align}\label{ka}
\frac{d}{dy}\int_{x}^{x+2\pi}W(t,y)\,dt = \int_{x}^{x+2\pi}W_y(t,y)\,dt\\
= \int_{x}^{x+2\pi}Z_x(t,y)\,dt=Z(x+2\pi,y)-Z(x,y)=K.\non
\end{align}
Integrating the above relation from $y=-d$ to $y=0$, and taking into
account the boundary conditions satisfied by $W$ in (\ref{wdn}), it
follows that \be K=\frac{2\pi[w]}{d},\label{cap}\ee where $[w]$
denotes the average of $w$ over one period.
 Hence $K=0$ if and only if $[w]=0$. For any integer $p\geq 0$ and $\alpha\in
(0,1)$, let us denote by $C^{p,\alpha}_{2\pi,\circ}$ the class of
functions in $C^{p,\alpha}_{2\pi}$ which have zero mean over one
period.

Let $w\in C^{p,\alpha}_{2\pi,\circ}$. Then the function
$(x,y)\mapsto Z(x,y)$ is $2\pi$-periodic in $x$ throughout $\mcr_d$.
We normalize the constant in the definition of $Z$ by the
requirement that $x\mapsto Z(x,0)$, which is a $2\pi$-periodic
function, has zero mean over one period. We define $\mcc_d(w)$ by
\[\mcc_d (w)(x)=Z(x,0),\qquad x\in\bdr.\]
 The mapping $w\mapsto
\mcc_d (w)$ is called the \emph{periodic Hilbert transform for a
strip}, and is a (bounded) linear operator from
$C^{p,\alpha}_{2\pi,\circ}$ into itself. It is easy to see from the
definitions and the Cauchy-Riemann equations that, when $w\in
C^{p,\alpha}_{2\pi,\circ}$ for $p\geq 1$ integer,
\[ \mcg_d(w)=(\mcc_d (w))'=\mcc_d(w').\]
 A more
detailed account of the properties of the operators $\mcg_d$ and
$\mcc_d$ is given in Section 3. Note that the space of
$2\pi$-periodic harmonic/holomorphic functions in a strip can be
identified with the space of harmonic/holomorphic functions in an
annulus, by means of an explicit conformal mapping, and thus the
operators $\mcg_d$ and $\mcc_d$ can be equivalently defined in the
setting of an annulus.

Suppose now that $w\in C^{p,\alpha}_{2\pi}$. By (\ref{dnc}),
(\ref{perz}) and (\ref{cap}), one can write \[
Z+iW=\frac{[w]}{d}(x+i(y+d))+Z_0+iW_0\qquad\text{in }\mcr_d,\] where
$Z_0, W_0\in C^{p,\alpha}(\overline\mcr_d)$ are $2\pi$-periodic in
$x$ throughout $\mcr_d$ and
\begin{align}\label{wdn0}
&\Delta W_0=0 \qquad\text{in }\mcr_d,\\
&W_0(x,-d)=0,\qquad x\in\bdr,\non\\ &W_0(x,0)=w(x)-[w],\qquad
x\in\bdr\non.\end{align} We normalize the constant in the definition
of $Z$ by the requirement that $x\mapsto Z_0(x,0)$, which is a
$2\pi$-periodic function, has zero mean over one period. It follows
that \be Z(x,0)=\frac{[w]}{d}x+\mcc_d(w-[w]).\label{parz}\ee It also
follows, using the Cauchy-Riemann equations, that, when $p\ge 1$,
\be\mcg_d(w)=\frac{[w]}{d}+(\mcc_d(w-[w]))'=\frac{[w]}{d}+\mcc_d(w').\label{cone}\ee

Throughout the paper we are interested in solutions $(\Om,\psi)$ of
the water-wave problem (\ref{apb}) of class $C^{1,\alpha}$, for some
$\alpha\in (0,1)$, by which which we mean that $\mcs$ has a
parametrization (\ref{spar}) with $u,\, v$ functions of class
$C^{1,\alpha}$, such that (\ref{lper}) holds and
\[u'(s)^2+v'(s)^2\neq 0\quad\text{for all }s\in\bdr,\]
while $\psi\in C^\infty(\Om)\cap C^{1,\alpha}(\overline\Om)$. The
main result of this section is that the free-boundary problem
(\ref{apb}) is then equivalent to the problem of finding a positive
number $h$ and a function $v\in C^{1,\alpha}_{2\pi}$ which satisfy
the following:
\begin{subequations}\label{maineq}
\begin{align}&\left\{\frac{m}{kh} +\gamma \Big(
\mathcal{G}_{kh}( v^2/2)- v \mathcal{G}_{kh}(v)\Big)\right\}^2=
(Q-2gv)\,\Big(v'^2 + \mcg_{kh}(v)^2\Big),\label{m0}\\&
[v]=h,\label{m1}\\
&v(x)>0\quad\text{for all }x\in\bdr,\label{pos}\\ &\text{the mapping
$x\mapsto \left(\frac{x}{k}+\mcc_{kh}(v-h)(x), v(x)\right)$ is
injective on $\bdr$},\label{m2}\\& v'(x)^2+\mcg_{kh}(v)(x)^2\neq
0\quad\text{for all }x\in\bdr,\label{m3}
\end{align}
\end{subequations}
where \be L=2\pi/k,\qquad k>0.\label{lk}\ee (Thus $2\pi$-periodicity
in $x$ becomes $L$-periodicity in $s=x/k$.) More precisely, we have
the following.

\begin{theorem}\label{refor} Let $(\Om,\psi)$ be a solution of {\rm (\ref{apb})} of class $C^{1,\alpha}$.
 Then there exist a positive number $h$, a function $v\in
C^{1,\alpha}_{2\pi}$ and a constant $a\in\bdr$ such that {\rm
(\ref{maineq})} holds and \be
\mcs=\left\{\left(a+\frac{x}{k}+\mcc_{kh}(v-h)(x), v(x)\right):
x\in\bdr\right\}.\label{paras}\ee Conversely, let $h>0$ and  $v\in
C^{1,\alpha}_{2\pi}$ be such that {\rm (\ref{maineq})} holds, and
let $a\in\bdr$ be arbitrary. Let $\mcs$ be defined by {\rm
(\ref{paras})}, and $\Om$ be the domain whose boundary consists of
$\mcs$ and the real axis $\mcb$. Then there exists a function $\psi$
in $\Om$ such that $(\Om,\psi)$ is a solution of {\rm (\ref{apb})}
of class $C^{1,\alpha}$.
\end{theorem}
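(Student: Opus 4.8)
The plan is to base both implications on a single device: realize $\Om$ as the conformal image of the strip $\mcr_{kh}$ under a map $\Phi=U+iV$, and exploit the elementary identity that if $\psi$ is pulled back to $\tilde\psi:=\psi\circ\Phi$ on $\mcr_{kh}$, then $H:=\tilde\psi+\tfrac{\gamma}{2}V^2$ is harmonic. Indeed, since $V$ is harmonic, $\Delta V^2=2|\nabla V|^2=2|\Phi'|^2$, while conformality gives $\Delta\tilde\psi=(\Delta\psi)\circ\Phi\cdot|\Phi'|^2=-\gamma|\Phi'|^2$, so $\Delta H=0$. This removes the vorticity from the interior equation and is what lets the linear operators $\mcg_{kh},\mcc_{kh}$ appear. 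I will also use, on $\{y=0\}$ with $v:=V(\cdot,0)$, the Cauchy--Riemann relations $U_x(x,0)=V_y(x,0)=\mcg_{kh}(v)(x)$ and $V_x(x,0)=v'(x)$, whence $|\Phi'|^2=v'^2+\mcg_{kh}(v)^2$ there.

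For the direct implication I would first construct, for a suitable $d>0$, a conformal homeomorphism $\Phi=U+iV$ of $\mcr_d$ onto $\Om$ that intertwines horizontal translation by $2\pi$ in the strip with translation by $L$ in the $(X,Y)$-plane and carries $\{y=-d\}$ onto $\mcb$, $\{y=0\}$ onto $\mcs$. Quotienting by the periods turns $\Om$ into a bounded doubly connected domain and $\mcr_d$ into a flat annulus, so such a $\Phi$ exists, with $d$ the conformal modulus; since $\partial\Om$ is $C^{1,\alpha}$ and nondegenerate, a boundary-regularity argument of Kellogg--Warschawski type (based on Lemma~\ref{lreg} and its global refinement) yields $\Phi\in C^{1,\alpha}(\overline{\mcr_d})$ with $\Phi'\neq0$ on $\overline{\mcr_d}$. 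Then $v\in C^{1,\alpha}_{2\pi}$ and $v>0$ since $\mcs$ lies strictly above $\mcb$, giving (\ref{pos}); $U(x+2\pi,y)-U(x,y)=L$ forces, via (\ref{cap}), $d=k[v]$, so $h:=[v]$ gives $d=kh$ and (\ref{m1}); (\ref{parz}) gives $U(x,0)=a+\tfrac{x}{k}+\mcc_{kh}(v-h)(x)$ for a suitable constant $a$, which is (\ref{paras}); and injectivity of $\Phi$ on $\{y=0\}$ with $|\Phi'|^2=v'^2+\mcg_{kh}(v)^2\neq0$ there give (\ref{m2}) and (\ref{m3}). It remains to produce (\ref{m0}). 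With $H=\tilde\psi+\tfrac{\gamma}{2}V^2$ harmonic and $2\pi$-periodic in $x$, one has $H=-m$ on $\{y=-kh\}$ (by (\ref{ap1}) and $V=0$ there) and $H=\tfrac{\gamma}{2}v^2$ on $\{y=0\}$ (by (\ref{ap2})); splitting off the affine-in-$y$ part and using the definition of $\mcg_{kh}$ gives $H_y(x,0)=\tfrac{m}{kh}+\gamma\mcg_{kh}(v^2/2)(x)$, hence
\[\tilde\psi_y(x,0)=H_y(x,0)-\gamma v\,\mcg_{kh}(v)=\tfrac{m}{kh}+\gamma\big(\mcg_{kh}(v^2/2)-v\,\mcg_{kh}(v)\big)(x),\]
the bracketed quantity in (\ref{m0}). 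As $\psi\equiv0$ on $\mcs$ we get $\tilde\psi(\cdot,0)\equiv0$, so $|\nabla\tilde\psi|^2=\tilde\psi_y(\cdot,0)^2$ on $\{y=0\}$, and conformality gives $|\nabla\psi|^2=\tilde\psi_y(x,0)^2/(v'^2+\mcg_{kh}(v)^2)$ on $\mcs$; substituting this and $Y=v(x)$ into Bernoulli's condition (\ref{ap3}) yields (\ref{m0}).

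For the converse, given $h>0$ and $v\in C^{1,\alpha}_{2\pi}$ satisfying (\ref{maineq}), I would let $V\in C^{1,\alpha}(\overline{\mcr_{kh}})$ solve (\ref{wdn}) with $d=kh$, $w=v$, take a harmonic conjugate $Z$ normalized so that, by (\ref{parz}), $Z(x,0)=\tfrac{x}{k}+\mcc_{kh}(v-h)(x)$, and set $\Phi=a+Z+iV$. Then $\Phi$ sends $\{y=-kh\}$ into $\mcb$ and $\{y=0\}$ onto the curve $\mcs$ of (\ref{paras}), injectively on $\{y=0\}$ by (\ref{m2}); the maximum principle and Hopf lemma give $V>0$ in $\mcr_{kh}$ and $V_y>0$ on $\{y=-kh\}$, which with (\ref{m3}) on the top yields $|\Phi'|\neq0$ on $\overline{\mcr_{kh}}$; and an argument-principle argument on the quotient annulus --- the two boundary circles being carried injectively onto the disjoint simple curves $\mcb$ and $\mcs$, with $\mcs$ above $\mcb$ by (\ref{pos}) --- shows $\Phi$ is a $C^{1,\alpha}$ diffeomorphism of $\overline{\mcr_{kh}}$ onto $\overline\Om$, where $\Om$ is the domain bounded by $\mcs$ and $\mcb$. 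Now let $H$ be harmonic on $\mcr_{kh}$ with $H=-m$ on $\{y=-kh\}$ and $H=\tfrac{\gamma}{2}v^2$ on $\{y=0\}$, set $\tilde\psi:=H-\tfrac{\gamma}{2}V^2$ and $\psi:=\tilde\psi\circ\Phi^{-1}$ on $\Om$. Then $\Delta\tilde\psi=-\gamma|\Phi'|^2$ gives $\Delta\psi=-\gamma$ in $\Om$; the boundary values give $\psi=-m$ on $\mcb$ and $\psi=0$ on $\mcs$; $\psi\in C^\infty(\Om)\cap C^{1,\alpha}(\overline\Om)$ since $H,V$ are harmonic with $C^{1,\alpha}$ data and $\Phi^{-1}\in C^{1,\alpha}(\overline\Om)$; $L$-periodicity in $X$ follows from $2\pi$-periodicity of $\tilde\psi$; and the computation of the previous paragraph, run backwards and now invoking (\ref{m0}), recovers $|\nabla\psi|^2+2gY=Q$ on $\mcs$. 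Hence $(\Om,\psi)$ solves (\ref{apb}).

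I expect the real obstacle to be the conformal-mapping input rather than the algebra: in the direct part, the existence of the uniformizing map of the periodic domain $\Om$ together with its $C^{1,\alpha}$ regularity up to the boundary and the non-vanishing of $\Phi'$ there; in the converse, upgrading injectivity of $\Phi$ on the boundary to global injectivity. All the rest is bookkeeping around the identity ``$\tilde\psi+\tfrac{\gamma}{2}V^2$ is harmonic'' and the normalizations fixed in Section~2.
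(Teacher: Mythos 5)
Your proposal is correct and follows essentially the same route as the paper: uniformize the periodic domain by a conformal map from a strip of height $kh$ (your $d=k[v]$ is exactly the conformal mean depth of Appendix A, with Kellogg--Warschawski for boundary regularity), pull back via the harmonic quantity $\psi+\tfrac{\gamma}{2}Y^2$ (your $H$ is the paper's $\zeta$ up to the additive constant $m$), and express the Bernoulli condition through $\mcg_{kh}$. In the converse, your argument-principle step on the quotient annulus is precisely the Darboux--Picard theorem the paper invokes to upgrade boundary injectivity to conformality, so the two proofs coincide in substance.
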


The presence of the constant $a$ in (\ref{paras}) is due to the
invariance of problem (\ref{apb}) to horizontal translations. We now
explain the meaning of the positive constant $h$ which arises in
(\ref{maineq}). A domain $\Om$ contained in the upper half of the
$(X,Y)$-plane is called an \emph{$L$-periodic strip-like domain} if
its boundary consists of the real axis $\mcb$ and a curve $\mcs$
described in parametric form by (\ref{spar}) such that (\ref{lper})
holds. For any such domain, we define its \emph{conformal mean
depth} as the unique positive number $h$ such that there exists a
conformal mapping $\tilde U+i\tilde V$ from the strip $\mcr_h$ onto
$\Om$, which admits an extension as a homeomorphism between the
closures of these domains, with $\{(x,0):x\in\bdr\}$ being mapped
onto $\mcs$ and $\{(x,-h):x\in\bdr\}$ being mapped onto $\mcb$, and
 such that \be \tilde U(x+L,y)=\tilde U(x,y)+L,\quad
\tilde V(x+L,y)= \tilde V(x,y),\qquad
(x,y)\in\mcr_h.\label{xcoum}\ee The existence of a unique number $h$
with this property is proved in Appendix A. (The notion of conformal
mean depth is reminiscent of that of the conformal modulus for
doubly connected domains, defined as the unique number $R\in (0,1)$
such that the domain is conformally equivalent to the annulus
$\{(x,y)\in\bdr^2: R^2<x^2+y^2<1\}$, see \cite{Ne, Ru}.) Note that,
for any horizontal strip, its conformal mean depth coincides with
its usual depth (height). Also, as we show in Appendix A, the
conformal mean depth coincides with the `mean depth' used in
\cite{AT} in the study of irrotational flows. The proof of Theorem
\ref{refor} reveals that $h$ in (\ref{maineq}) is exactly the
conformal mean depth of $\Om$.

\begin{proof}[Proof of Theorem \ref{refor}.] Let $(\Om,\psi)$ be a
solution of (\ref{apb}) of class $C^{1,\alpha}$.
 Let $h$ be the conformal mean depth of
$\Om$, and let $\tilde U+i\tilde V$ be the associated conformal
mapping. As we show in Appendix A, it is a consequence of the
Kellogg-Warschawski Theorem that $\tilde U,\,\tilde V \in
C^{1,\alpha}(\overline{\mcr_h})$ and \[\quad\tilde U_x^2(x,0)+\tilde
V_x^2(x,0)\neq 0\qquad\text{for all }x\in\bdr.\]  Consider the
mapping $U+iV:\mcr_{kh}\to\Om$ given by \be U(x,y)=\tilde U(x/k,
y/k),\qquad V(x,y)=\tilde V(x/k, y/k),\qquad
(x,y)\in\mcr_{kh},\label{uv} \ee where $k$ is given by (\ref{lk}).
Then $U+iV$ is a conformal mapping from $\mcr_{kh}$ onto $\Om$,
which admits an extension as a homeomorphism between the closures of
these domains, with $\{(x,0):x\in\bdr\}$ being mapped onto $\mcs$
and $\{(x,-kh):x\in\bdr\}$ being mapped onto $\mcb$, and
 such that \be U(x+2\pi,y)= U(x,y)+\frac{2\pi}{k},\quad
 V(x+2\pi,y)= V(x,y),\qquad
(x,y)\in\mcr_{kh}.\label{xcou}\ee Moreover, $U,\,V \in
C^{1,\alpha}(\overline{\mcr_h})$ and \be U_x^2(x,0)+ V_x^2(x,0)\neq
0\qquad\text{for all }x\in\bdr,\label{mapbd}\ee while the conformal
mapping properties of $U+iV$ imply that \be
V(x,-h)=0,\label{cou1}\ee \be \text{the mapping $x\mapsto (U(x,0),
V(x,0))$ is injective on $\bdr$},\label{inj}\ee
 \be \mcs=\{(U(x,0),
V(x,0)):x\in\bdr\}.\label{S}\ee Let \be v(x)=V(x,0) \quad\text{for
all }x\in\bdr.\label{vv}\ee Then $v\in C^{1,\alpha}_{2\pi}$. Since
$V$ is harmonic in $\mcr_{kh}$ and satisfies (\ref{cou1}) and
(\ref{vv}), we deduce, by comparing (\ref{xcou}) with (\ref{perz}),
(\ref{ka}) and (\ref{cap}),  that \be [v]=h.\label{averv}\ee  It
follows, by using (\ref{parz}), that (\ref{inj}) and (\ref{S}) can
be rewritten as \be \text{the mapping $x\mapsto
\left(\frac{x}{k}+\mcc_{kh}(v-h)(x), v(x)\right)$ is injective on
$\bdr$},\label{inj1}\ee \be
\mcs=\left\{\left(a+\frac{x}{k}+\mcc_{kh}(v-h)(x),
v(x)\right):x\in\bdr\right\},\label{S1}\ee for some $a\in\bdr$.
Also, using the Cauchy-Riemann equations one can rewrite
(\ref{mapbd}) as \be v'(x)^2+\mcg_{kh}(v)(x)^2\neq 0\qquad\text{for
all }x\in\bdr\label{mapbd1}.\ee Observe also that, since $\mcs$ is
contained in the upper half-plane, it follows that \be
v(x)>0\quad\text{for all }x\in\bdr.\label{posi}\ee

 Suppose now that $\psi$ satisfies (\ref{apb}), and let
$\xi:\mcr_{kh}\to\bdr$ be given by \be \xi(x,y)=\psi(U(x,y),
V(x,y)),\qquad (x,y)\in\mcr_{kh}.\label{xi}\ee
 Note that (\ref{ap0}) can be rewritten as
\[
(X,Y)\mapsto\psi(X,Y)+\frac{\gamma}{2} Y^2\quad\text{is a harmonic
function in $\Om$}.\] Hence, since harmonic functions are invariant
under conformal mappings, it follows that
\begin{subequations}\label{mapb}
\be \xi+\frac{\gamma}{2}V^2\quad\text{is harmonic in
$\mcr_{kh}$}.\label{map0}\ee It is immediate from (\ref{ap1}) and
(\ref{ap2}) that
\begin{align}&\xi(x,-kh)=-m, \qquad x\in\bdr,\label{map1}\\ &\xi(x,0)=0,\qquad
x\in\bdr.\label{map2}\end{align} By using the chain rule and the
Cauchy-Riemann equations, we easily obtain that
\[{\xi_x^2+\xi_y^2}= (\psi_X^2(U,V)+\psi_Y^2(U,V))(V_x^2+V_y^2)\quad\text{in }\overline{\mcr_{kh}}.\]
It follows from (\ref{ap3}), taking into account (\ref{mapbd}), that
\be \xi_x^2+\xi_y^2=(Q-2gV)(V_x^2+V_y^2)\quad\text{at $(x,0)$ for
all $x\in\bdr$.}\label{map3}\ee
\end{subequations}
Let $\zeta:\mcr_{kh}\to\bdr$ be given by
\begin{equation}\label{zeta}
\zeta=\xi+m+\frac{\gamma}{2} V^2.
\end{equation}
Then (\ref{mapb}) can be equivalently rewritten as
\begin{subequations}\label{napb}
\begin{align}
& \Delta\ze=0\quad\text{in }\mcr_{kh},\label{nap0}\\&
\ze(x,-kh)=0\label{nap1}\quad\text{for all
$x\in\bdr$},\\&\ze(x,0)=m+\frac{\gamma}{2}v^2(x)\quad\text{for all
$x\in\bdr$},
\label{nap2}\\
 &(\ze_y-\gamma VV_y)^2=(Q-2gV)(V_x^2+V_y^2)
\quad\text{at $(x,0)$ for all $x\in\bdr$}\label{nap3}.
 \end{align}
\end{subequations}

Equation (\ref{napb}) can be conveniently expressed by means of the
Dirichlet-Neumann operator as \be \left\{\frac{m}{kh} +\gamma \Big(
\mathcal{G}_{kh}( v^2/2)- v \mathcal{G}_{kh}(v)\Big)\right\}^2=
(Q-2gv)\,\Big(v'^2 + \mcg_{kh}(v)^2\Big),\label{main}\ee where we
have used the linearity of $\mcg_{kh}$ and the fact that its action
on constant functions is given by (\ref{dnc}).By gathering
(\ref{averv}), (\ref{inj1}), (\ref{mapbd1}), (\ref{posi}) and
(\ref{main}), we obtain (\ref{maineq}).

Conversely, suppose that the positive number $h$ and the function
$v\in C^{1,\alpha}_{2\pi}$ satisfy (\ref{maineq}). We now show how
one can construct a solution of (\ref{apb}) by reversing the process
which led from (\ref{apb}) to (\ref{maineq}). Let $V$ be the
harmonic function in $\mcr_{kh}$ which satisfies (\ref{cou1}) and
(\ref{vv}), and let $U:\mcr_{kh}\to\bdr$ be such that $U+iV$ is
holomorphic. By Lemma \ref{lreg}, $U,\, V\in
C^{1,\alpha}(\overline{\mcr_{kh}})$. Condition (\ref{m1}) ensures
that (\ref{xcou}) holds. Condition (\ref{m2}) shows that the curve
$\mcs$ given by (\ref{S}), which can also be expressed as
(\ref{S1}), is non-self-intersecting, and (\ref{pos}) shows that it
is contained in the upper half-plane. (The fact that $U$ is
determined by $V$ only up to an additive constant leads to the
presence of an arbitrary constant $a\in\bdr$ in (\ref{S1}).) Let
$\Om$ be the domain whose boundary consists of $\mcs$ and $\mcb$. An
application of the Darboux-Picard Theorem \cite[Corollary 9.16, p.\
310]{Bu}, see \cite[Proof of Theorem 3.4]{V2} for details, shows
that $U+iV$ is a conformal mapping from $\mcr_{kh}$ onto $\Om$,
which admits an extension as a homeomorphism between the closures of
these domains, with $\{(x,0):x\in\bdr\}$ being mapped onto $\mcs$
and $\{(x,-kh):x\in\bdr\}$ being mapped onto $\mcb$. It follows from
(\ref{xcou}) that $\Om$ is a $L$-periodic strip-like domain, where
$L$ is given by (\ref{lk}). The domain $\Om$ has conformal mean
depth $h$, since the mapping $\tilde U+i\tilde V:\mcr_h\to\Om$,
where $\tilde U, \tilde V$ are given by (\ref{uv}), has all the
required properties. Note also that (\ref{m3}) shows that $\mcs$ is
a $C^{1,\alpha}$ curve. Let us define $\zeta$ as the unique solution
of (\ref{nap0})-(\ref{nap2}). Obviously, $\zeta\in
C^{1,\alpha}(\overline{\mcr_{kh}})\cap C^\infty(\mcr_{kh})$. Let us
define $\xi$ by (\ref{zeta}), and then $\psi$ by (\ref{xi}). We
obtain that $\psi\in C^{1,\alpha}(\overline\Om)\cap C^\infty(\Om)$
satisfies (\ref{ap0})-(\ref{ap2}). Finally, since (\ref{m0}) holds,
we obtain that $\psi$ satisfies (\ref{ap3}). This completes the
proof.
\end{proof}

\bigskip\bigskip

\begin{center}
\includegraphics[width=13cm]{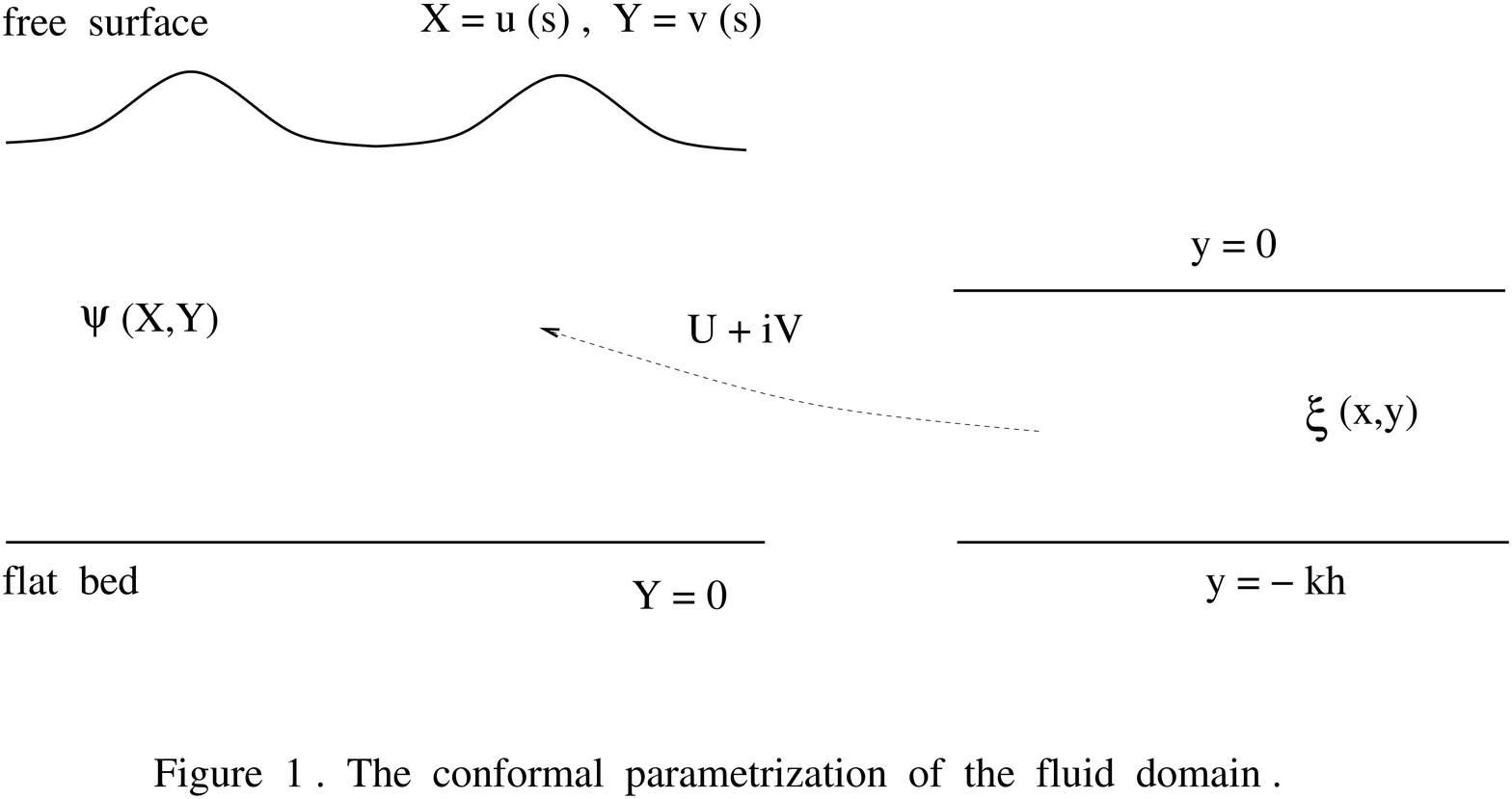}
\end{center}

\bigskip\bigskip

For the sake of completeness, notice that differentiating
(\ref{xi}), solving the resulting linear system for $\psi_X$ and
$\psi_Y$ and taking subsequently (\ref{zeta}) into accoount, we find
that the velocity field $(\psi_Y,\,-\,\psi_X)$ at the location
$(X,Y)=\Big(U(x,y),\,V(x,y)\Big) \in \Omega$, where $(x,y) \in
\mcr_{kh}$, can be expressed as
$$\Big(\frac{V_x\zeta_x+V_y\zeta_y}{V_x^2+V_y^2}\,-\,\gamma V,\, \frac{V_x\zeta_y-V_y\zeta_x}{V_x^2+V_y^2}\Big)$$
in terms of $\zeta(x,y)$ and of the conformal map $U+iV$ from
$\mcr_{kh}$ to $\Omega$.

\section{On periodic harmonic functions in a strip}

This section contains a more detailed investigation of the operators
$\mcc_d$ and $\mcg_d$, for $d>0$, in particular their representation
as singular integrals and some of its consequences. These results
are important in the study, carried out in Sections 4 and 5, of the
regularity and local bifurcation of solutions of (\ref{maineq}).

Let $L^2_{2\pi}$ be the space of $2\pi$-periodic locally square
integrable functions of one real variable, and $W^{1,2}_{2\pi}$ be
the space of $2\pi$-periodic locally absolutely continuous functions
whose weak derivatives are in  $L^2_{2\pi}$. By $L^2_{2\pi,\circ}$
and $W^{1,2}_{2\pi,\circ}$ be denote the subspaces of $L^2_{2\pi}$
and $W^{1,2}_{2\pi}$ whose elements have zero mean over one period.
Every function $w\in L^2_{2\pi}$ has a Fourier series expansion \be
w= [w]+\sum_{n=1}^\infty a_n \cos (nx) + \sum_{n=1}^\infty b_n \sin
(nx),\label{fexp}\ee and, if we denote
\[||w||=\left([w]^2+\sum_{n=1}^\infty(a_n^2+b_n^2)\right)^{1/2}\]
then, by Parseval's Identity, $||\cdot||$ is an equivalent norm to
the standard one in $L^2_{2\pi}$.

For any $d>0$ and $w\in L^2_{2\pi}$, the function $W:\mcr_d\to\bdr$
given by
\[W(x,y)=\frac{[w]}{d}(y+d)+\sum_{n=1}^\infty a_n \frac{\sinh (n(y+d))}{\sinh
(nd)}\cos (nx)
 +\sum_{n=1}^\infty b_n\frac{\sinh (n(y+d))}{\sinh nd}\sin (nx),\]
is the unique solution of (\ref{wdn}) with the third condition there
being satisfied not necessarily in the classical sense, but rather
as \be \lim_{y\nearrow 0}||W(\cdot, y)-w||=0.\ee
 When $w\in W^{1,2}_{2\pi}$, we define $\mcg_d(w)$ as the unique function in $L^2_{2\pi}$ such that
 \be \lim_{y\nearrow 0}||W_y(\cdot, y)-\mcg_d(w)||=0,\ee
namely
\begin{equation}\label{G}
 \mathcal{G}_d(w)= \frac{[w]}{d}+\sum_{n=1}^\infty na_n \coth
 (nd)\cos(nx)
 +\sum_{n=1}^\infty n b_n\coth (nd)\sin (nx).
 \end{equation}
The mapping $\mcg_d$ is the periodic Dirichlet-Neumann operator for
a strip, a bounded linear operator from $W^{1,2}_{2\pi}$ into
$L^2_{2\pi}$.

Suppose that $w\in L^2_{2\pi,\circ}$. Then any harmonic function $Z$
in $\mcr_d$ such that $Z+iW$ is holomorphic is given by
\[Z(x,y)=C+\sum_{n=1}^\infty a_n \frac{\cosh (n(y+d))}{\sinh
(nd)}\sin (nx)
 -\sum_{n=1}^\infty b_n\frac{\cosh (n(y+d))}{\sinh nd}\cos (nx),\]
 where $C$ is a constant. We choose $C=0$ and define $\mcc_d(w)$ as
 the unique function in $L^2_{2\pi,\circ}$ such that
  \be \lim_{y\nearrow 0}||Z(\cdot, y)-\mcc_d(w)||=0,\ee
  namely
  \begin{equation}\label{Cd}
 \mathcal{C}_d(w)=\sum_{n=1}^\infty a_n \coth
 (nd)\sin(nx)
 -\sum_{n=1}^\infty b_n\coth (nd)\cos (nx).
 \end{equation}
 The mapping $\mcc_d$ is the periodic Hilbert transform for a strip, a bounded linear operator from
 $L^2_{2\pi,\circ}$ into itself. Moreover, one can easily see that $\mcc_d$ is a bijection
 from $L^2_{2\pi,\circ}$ onto itself. Let us denote by $\mcc_d^{-1}$
 its inverse. Observe that, by formally setting $d=\infty$ in (\ref{Cd}), we obtain
 the familiar
 periodic Hilbert transform \cite{BT,ST,To},
  \begin{equation}\label{C}
 \mathcal{C}(w)=\sum_{n=1}^\infty a_n \sin(nx)
 -\sum_{n=1}^\infty b_n\cos (nx).
 \end{equation}
for all $w\in L^2_{2\pi,\circ}$ with the Fourier expansion
(\ref{fexp}). The operator $\mcc$ is a bijection
 from $L^2_{2\pi,\circ}$ onto itself, and $\mcc^{-1}=-\mcc$.

The operator $\mcc$ has a pointwise almost everywhere representation
as a singular integral
\begin{equation}\label{ht}
\mcc (w)(t)=\frac{1}{2\pi}\,PV\,\int_{-\pi}^\pi
{\cot\,\left(\frac{t-s}{2}\right)}{w(s)}\,ds,
\end{equation}
where $PV$ denotes a principal value integral \cite{St}. The
representation (\ref{ht}) is useful in proving Privalov's Theorem
\cite{Ko}: for every $\alpha \in (0,1)$ the operator $\mcc$ is a
bounded linear operator from $C^{0,\alpha}_{2\pi}$ into itself.
Moreover, (\ref{ht}) is also instrumental in proving (see
\cite[Lemma 10.5.3, p.\ 148]{BT}) that the operator
\begin{equation}\label{Q0}
w\mapsto{\mathcal Q}( w)=w\,\mcc (w') - \mcc(ww')
\end{equation}
maps functions $w \in C^{1,\alpha}_{2\pi}$ into functions in
$C^{1,\delta}_{2\pi}$ for any $\delta \in (0,\alpha)$. We now prove
that these results, and the corresponding ones for functions of
higher regularity, extend to the case of $\mcc_d$, for any $d>0$, by
exhibiting an analogous representation of $\mcc_d$ as a singular
integral. Let us denote by $\mathcal{Q}_d$ the mapping
\begin{equation}\label{Q}
w \mapsto {\mathcal Q}_d( w)=w\,\mcc_d (w') -\mcc_d(ww').
\end{equation}

\begin{lemma}\label{lpriv} For any $d>0$, $p\ge 0$ integer and $\alpha\in (0,1)$, $\mcc_d$ is a
bounded linear operator from $C^{p,\alpha}_{2\pi,\circ}$ into
itself. Moreover, $\mcc_d^{-1}$ is also a bounded linear operator
from $C^{p,\alpha}_{2\pi,\circ}$ into itself.
\end{lemma}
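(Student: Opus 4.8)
The plan is to reduce the statement about $\mcc_d$ to the already-known statement about the classical periodic Hilbert transform $\mcc$ by showing that the difference $\mcc_d - \mcc$ is a smoothing operator. More precisely, I would first establish a singular-integral representation
\[
\mcc_d(w)(t)=\frac{1}{2\pi}\,PV\int_{-\pi}^{\pi}\cot\!\left(\frac{t-s}{2}\right)w(s)\,ds+\frac{1}{2\pi}\int_{-\pi}^{\pi}k_d(t-s)\,w(s)\,ds,
\]
valid for $w\in C^{0,\alpha}_{2\pi,\circ}$, where $k_d$ is a fixed $2\pi$-periodic kernel that is smooth (real-analytic) on all of $\bdr$. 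The existence and smoothness of $k_d$ is read off from the Fourier series: comparing (\ref{Cd}) with (\ref{C}), the operator $\mcc_d-\mcc$ acts on $\cos(nx)$ and $\sin(nx)$ by multiplication by $\coth(nd)-1$, which decays like $2e^{-2nd}$ as $n\to\infty$. Hence $\mcc_d-\mcc$ is convolution with $k_d(x)=\sum_{n\ge 1}2(\coth(nd)-1)\bigl(\cos(nx)\cdot(\,\cdot\,)+\dots\bigr)$; the exponential decay of the coefficients shows this series and all its term-by-term derivatives converge uniformly, so $k_d\in C^\infty_{2\pi}$ (indeed it extends holomorphically to a strip around $\bdr$).

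Given this, the proof of boundedness is short. Convolution with a $C^\infty_{2\pi}$ kernel maps $C^{p,\alpha}_{2\pi}$ into $C^{p,\alpha}_{2\pi}$ (in fact into $C^\infty_{2\pi}$) boundedly, for every integer $p\ge 0$ and $\alpha\in(0,1)$, by differentiating under the integral sign and using that translation is an isometry on these spaces. Privalov's Theorem (quoted in the excerpt after (\ref{ht})) gives that $\mcc$ is bounded on $C^{0,\alpha}_{2\pi}$, and the higher-order case $C^{p,\alpha}_{2\pi}$ follows by differentiating and using $\mcc(w)'=\mcc(w')$ together with the fact that $\mcc$ preserves zero mean. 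Adding the two pieces, $\mcc_d=\mcc+(\mcc_d-\mcc)$ is bounded from $C^{p,\alpha}_{2\pi,\circ}$ into itself. One should also check that $\mcc_d$ genuinely maps into the zero-mean subspace, which is immediate from the Fourier representation (\ref{Cd}).

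For the inverse, I would avoid trying to write $\mcc_d^{-1}$ as an explicit singular integral and instead argue by a Neumann-series / Fredholm perturbation. From $\mcc^{-1}=-\mcc$ we get $\mcc_d=\mcc(I-\mcc\,T_d)$ on $L^2_{2\pi,\circ}$, where $T_d:=\mcc_d-\mcc$ is the smoothing operator above, so $\mcc_d^{-1}=(I-\mcc\,T_d)^{-1}(-\mcc)$ whenever $I-\mcc T_d$ is invertible on $C^{p,\alpha}_{2\pi,\circ}$. On $L^2_{2\pi,\circ}$ it is invertible because $\mcc_d$ is a bijection there (already noted in the excerpt); since $\mcc T_d$ is compact on $C^{p,\alpha}_{2\pi,\circ}$ (it factors through $C^\infty$ and the inclusion $C^{p+1,\alpha}\hookrightarrow C^{p,\alpha}$ is compact by Arzel\`a--Ascoli), the Fredholm alternative plus injectivity on the dense $L^2$-level upgrades this to invertibility of $I-\mcc T_d$ on $C^{p,\alpha}_{2\pi,\circ}$ with bounded inverse, and hence $\mcc_d^{-1}=(I-\mcc T_d)^{-1}(-\mcc)$ is bounded on $C^{p,\alpha}_{2\pi,\circ}$. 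The main obstacle, and the only place requiring care, is the first step: rigorously justifying the singular-integral representation of $\mcc_d$ with a smooth remainder. This means splitting the convolution kernel for $\mcc$ out of the kernel for $\mcc_d$, controlling the $PV$ integral exactly as in the classical case, and confirming that what is left after subtracting the $\cot((t-s)/2)$ singularity is precisely the uniformly convergent series $k_d$; the exponential decay $\coth(nd)-1=O(e^{-2nd})$ is what makes everything go through, and it is worth recording explicitly since the same estimate is what forces $\mcg_d-\mcc'$ to be smoothing as well.
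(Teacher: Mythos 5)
Your argument is correct, and its first half coincides with the paper's own proof: the paper likewise writes $\mcc_d=\mcc+\mck_d$ (its (\ref{cade})), notes that the multiplier $\lambda_n=\coth(nd)-1=2/(e^{2|n|d}-1)$ decays rapidly so that $\mck_d$ is periodic convolution with a $C^\infty$ kernel $\kappa_d$, and then invokes Privalov's Theorem together with the fact that $\mcc$ commutes with differentiation on zero-mean functions. Where you genuinely diverge is the inverse. The paper disposes of $\mcc_d^{-1}$ by the same one-line device used for $\mcc_d$: its multiplier is $i\tanh(nd)$, and since $\tanh(nd)-\textnormal{sgn}(n)=O(e^{-2|n|d})$ one can write $\mcc_d^{-1}=-\mcc+\tilde\mck_d$ with $\tilde\mck_d$ again a convolution with a smooth kernel, so boundedness follows verbatim. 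You instead factor $\mcc_d=\mcc(I-\mcc T_d)$ (using $\mcc^2=-I$ on $L^2_{2\pi,\circ}$) and invert $I-\mcc T_d$ on $C^{p,\alpha}_{2\pi,\circ}$ via compactness and the Fredholm alternative, with injectivity inherited from the $L^2$ bijectivity of $\mcc_d$; this is valid, and note that $\mcc T_d$ is itself convolution with a smooth kernel (its multiplier is $-\lambda_n$), so it maps $C^{p,\alpha}_{2\pi,\circ}$ boundedly into $C^{p+1,\alpha}_{2\pi,\circ}$, which embeds compactly --- that is the precise form of your slightly loose ``factors through $C^\infty$'' (a Fr\'echet, not Banach, space). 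The trade-off: your Fredholm route costs extra machinery and the identification of the $L^2$ inverse with the H\"older one, but it would survive in situations where the symbol of the inverse is not explicitly computable; the paper's route is shorter here precisely because the symbol $i\tanh(nd)$ is explicit and the exponential-decay argument applies unchanged.
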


\begin{lemma}\label{lcom}
  If $w \in C^{p,\alpha}_{2\pi}$ with $p\ge 1$ integer and $\alpha \in
(0,1)$, then ${\mathcal Q}_d(w) \in C^{p,\delta}_{2\pi}$ for any
$\delta \in (0,\alpha)$.
\end{lemma}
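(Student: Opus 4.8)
The plan is to reduce Lemma \ref{lcom} to the already-known case $d=\infty$ (the statement about the operator $\mathcal Q$ in (\ref{Q0}), taken from \cite[Lemma 10.5.3, p.\ 148]{BT}) by showing that the difference $\mathcal Q_d(w) - \mathcal Q(w)$ is \emph{smoothing}, in the sense that it maps $C^{p,\alpha}_{2\pi}$ into $C^{p+1,\alpha}_{2\pi}$, or at any rate into $C^{p,\delta}_{2\pi}$ with no loss. For this I first need a representation of $\mcc_d$ as a singular integral analogous to (\ref{ht}), which is exactly what Lemma \ref{lpriv} is built on: writing the kernel of $\mcc_d$ (this can be read off from the Fourier multiplier $\coth(nd)$ in (\ref{Cd}), or obtained via the conformal map from the strip to an annulus mentioned in Section 2) in the form $\tfrac12\cot\big(\tfrac{t-s}{2}\big) + R_d(t-s)$, where $R_d$ is a smooth $2\pi$-periodic kernel. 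Indeed $\coth(nd) = 1 + \tfrac{2}{e^{2nd}-1}$, and the correction multiplier $\tfrac{2}{e^{2nd}-1}$ decays exponentially in $n$, so the associated convolution operator $w\mapsto \int_{-\pi}^{\pi} R_d(t-s)\,w(s)\,ds$ has a real-analytic kernel and therefore maps $L^2_{2\pi,\circ}$, and a fortiori $C^{p,\alpha}_{2\pi,\circ}$, into $C^\infty_{2\pi,\circ}$.

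With this splitting in hand, write $\mcc_d(f) = \mcc(f) + \mcr_d(f)$, where $\mcr_d$ denotes the smoothing convolution just described, valid for $f\in C^{p,\alpha}_{2\pi,\circ}$. Then
\begin{align*}
\mathcal Q_d(w) - \mathcal Q(w)
&= w\,\big(\mcc_d(w') - \mcc(w')\big) - \big(\mcc_d(ww') - \mcc(ww')\big)\\
&= w\,\mcr_d(w') - \mcr_d(ww').
\end{align*}
Here one must be a little careful: $\mcc_d$ and $\mcc$ act on zero-mean functions, so one should note that $w'$ and $(ww')' $-type expressions — more precisely $w' = \big(w-[w]\big)'$ and $ww' = \tfrac12 (w^2)' = \tfrac12\big(w^2 - [w^2]\big)'$ — indeed have zero mean, so the formulas above make sense and the cancellation is legitimate. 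Now if $w\in C^{p,\alpha}_{2\pi}$ with $p\ge 1$, then $w'\in C^{p-1,\alpha}_{2\pi}$ and $ww'\in C^{p-1,\alpha}_{2\pi}$, hence $\mcr_d(w')$ and $\mcr_d(ww')$ are both in $C^\infty_{2\pi}$; multiplying the first by $w\in C^{p,\alpha}_{2\pi}$ keeps us in $C^{p,\alpha}_{2\pi}\subset C^{p,\delta}_{2\pi}$. Thus $\mathcal Q_d(w) - \mathcal Q(w)\in C^{p,\delta}_{2\pi}$ for every $\delta\in(0,\alpha)$ — in fact it already lies in $C^{p,\alpha}_{2\pi}$. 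Combining this with $\mathcal Q(w)\in C^{p,\delta}_{2\pi}$, which is the cited result for the classical periodic Hilbert transform (applied to $w$ and to its derivatives of order up to $p-1$, using that $\mathcal Q$ gains no derivatives beyond the $C^{1,\alpha}\to C^{1,\delta}$ statement, so that $\partial^{p-1}\mathcal Q(w)$ is controlled in $C^{1,\delta}$), yields $\mathcal Q_d(w)\in C^{p,\delta}_{2\pi}$, as claimed.

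The main obstacle is the step of making the classical result $\mathcal Q(w)\in C^{p,\delta}_{2\pi}$ genuinely available for all integers $p\ge 1$, not just $p=1$: the reference \cite[Lemma 10.5.3]{BT} is stated for $C^{1,\alpha}$, and one needs to bootstrap it to higher regularity. This is done by differentiating the identity $\mathcal Q(w) = w\,\mcc(w') - \mcc(ww')$ repeatedly: each derivative of $\mathcal Q(w)$ is a sum of terms of the form $w^{(j)}\mcc(w^{(\ell)})$ with $j+\ell\le p$ together with a term $\mathcal Q(w^{(p-1)})$-like commutator to which the base case applies, and Privalov's theorem (the $d=\infty$ case of Lemma \ref{lpriv}) handles the non-commutator pieces since $\mcc$ preserves $C^{0,\alpha}_{2\pi,\circ}$. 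Care with mean-zero normalizations — subtracting off averages before applying $\mcc$ or $\mcc_d$ — is the only bookkeeping subtlety; everything else is a routine combination of the Hölder mapping properties already recorded. An alternative, should the bootstrap prove cumbersome to present cleanly, is to invoke the refined global Privalov theorem promised in Appendix B together with Lemma \ref{lreg} directly on the harmonic extension defining $\mcc_d$, but the smoothing-kernel argument above keeps the proof self-contained and short.
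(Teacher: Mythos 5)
Your first step is exactly the paper's: you split $\mcc_d=\mcc+\mck_d$ via the multiplier identity $\coth(nd)=1+\tfrac{2}{e^{2nd}-1}$, observe that the correction is convolution with a smooth kernel, and thereby reduce the lemma to the assertion that $\mathcal{Q}$ maps $C^{p,\alpha}_{2\pi}$ into $C^{p,\delta}_{2\pi}$. The gap is in how you establish that assertion for $p\ge 2$. Differentiating $p-1$ times by Leibniz and regrouping gives
\[
\pa_x^{p-1}\mathcal{Q}(w)=\sum_{j=0}^{p-1}\binom{p-1}{j}\Big(w^{(j)}\mcc(w^{(p-j)})-\mcc\big(w^{(j)}w^{(p-j)}\big)\Big),
\]
and while the terms with $1\le j\le p-1$ are indeed in $C^{1,\alpha}_{2\pi}$ by Privalov's theorem as you say, the remaining term is $w\,\mcc(w^{(p)})-\mcc(w\,w^{(p)})$. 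This is a commutator $f\,\mcc(g)-\mcc(fg)$ with $f=w$ and $g=w^{(p)}$, and for $p\ge 2$ one has $g\neq f'$, so it is \emph{not} of the quadratic form $u\,\mcc(u')-\mcc(uu')$ treated by the cited result of \cite{BT}; calling it a ``$\mathcal{Q}(w^{(p-1)})$-like commutator to which the base case applies'' is precisely the unproved step. This is the point the paper addresses by proving the genuinely bilinear commutator estimate of Appendix B (Lemma \ref{lm}: $f\in C^{1,\alpha}_{2\pi}$, $g\in C^{0,\alpha}_{2\pi}$ imply $f\,\mcc(g)-\mcc(fg)\in C^{1,\delta}_{2\pi}$), which is then applied with $f=w$, $g=w^{(p)}$.

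The gap is localized and repairable without Appendix B if you insist on using only the quadratic case: by polarization, $w\,\mcc(w^{(p)})-\mcc(w\,w^{(p)})$ plus the transposed commutator $w^{(p-1)}\mcc(w')-\mcc(w^{(p-1)}w')$ equals $\mathcal{Q}(w+w^{(p-1)})-\mathcal{Q}(w)-\mathcal{Q}(w^{(p-1)})$, to which the $C^{1,\alpha}$ result of \cite{BT} applies since $w,\,w^{(p-1)}\in C^{1,\alpha}_{2\pi}$; and for $p\ge 2$ the transposed commutator lies in $C^{1,\alpha}_{2\pi}$ directly by Privalov's theorem, because $w^{(p-1)},\,w'\in C^{1,\alpha}_{2\pi}$. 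With that (or with the paper's Lemma \ref{lm}) inserted, your argument goes through; your secondary suggestion of invoking a ``global Privalov theorem'' on the harmonic extension is too vague to count as a proof as it stands. Also note that your smoothing term $w\,\mck_d(w')-\mck_d(ww')$ lands in $C^{p,\alpha}_{2\pi}$, not better, since it is multiplied by $w$ — which is all you need, and matches the paper.
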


Observe that the operators $\mcc$ and $\mcc_d$ can be extended by
(complex-)linearity to complex-valued functions in
$L^2_{2\pi,\circ}$, and are characterized by their action on the
trigonometric system $\{e^{int}\}_{n\in\mathbb{Z}\setminus\{0\}}$:
\begin{equation}\label{c1}
\mcc(e^{int})=-{i}\,\textnormal{sgn} (n)\,e^{int},\qquad n \in \bdz
\setminus \{0\},
\end{equation}
and
\begin{equation}\label{c}
 \mcc_d(e^{int})=-{i}\,{\coth(nd)}\,e^{int},\qquad n \in \bdz
\setminus \{0\}.
\end{equation}
Let us write \be\mcc_d=\mcc+\mathcal{K}_d.\label{cade}\ee Then the
operator $\mathcal{K}_d$ corresponds to the Fourier multiplier
operator on $L^2_{2\pi,\circ}$: \be \Big\{w = \sum_{n \in
\bdz\setminus\{0\}} c_n\,e^{int}\Big\} \mapsto \Big\{\sum_{n \in
\bdz\setminus \{0\}}
-i\,\textnormal{sgn}(n)\,\lambda_n\,c_n\,e^{int}\Big\}\label{kap}\ee
with
$$\lambda_n=\frac{2}{e^{2|n|d}-1},\qquad |n| \ge 1.$$
 Notice that for any $n \ge 1$,
$$e^{2nd}-1=\sum_{m=0}^\infty \frac{1}{m!}\,n^m\,(2d)^m \ge
\frac{1}{(p+1)!}\,n^{p+1}\,(2d)^{p+1},\quad p \ge 1,$$ yields
$$0 \le \frac{n^p}{e^{2nd}-1} \le \frac{1}{n}\,\frac{(p+1)!}{(2d)^{p+1}},\quad p \ge 1,$$
so that
$$\sum_{n\in\bdz\setminus\{0\}} |n|^{2p}\lambda_n^2 \le 4\,\Big(\frac{(p+1)!}{(2d)^{p+1}}
\Big)^2\,\sum_{n\in\bdz\setminus\{0\}} \frac{1}{n^2} < \infty.$$ Let
$\kappa_d\in L^2_{2\pi,\circ}$ be given by
\be\kappa_d(t)=\sum_{n\in\bdz\setminus\{0\}}-i\,\textnormal{sgn}(n)\,\lambda_n
\,e^{int} =\sum_{n=1}^{\infty}2\lambda_n\sin(nt),
\quad{t\in\bdr}.\label{kapad}\ee Since the Fourier coefficients of
$\kappa_d$ decays faster than any power of $|n|$, $\kappa_d$ is a
function of class $C^\infty$ (see \cite{DM}). It follows from
(\ref{kap}) that $\mck_d(w)$ is the periodic convolution of $w$ with
the smooth function $\kappa_d$ given by (\ref{kapad}), namely
\begin{equation}\label{c2}
\mck_d (w)(t)=\frac{1}{2\pi}\int_{-\pi}^\pi \kappa_d
(t-s)\,w(s)\,ds,\qquad t\in\bdr.
\end{equation}

\begin{proof}[Proof of Lemma \ref{lpriv}.] It is easy to see that
$\mck_d$ is a bounded linear operator from
$C^{p,\alpha}_{2\pi,\circ}$ into itself. The operator $\mcc$ is also
a bounded linear operator from $C^{p,\alpha}_{2\pi,\circ}$ into
itself, by Privalov's Theorem
 and the fact that $\mcc$ commutes with differentiation on
$C^{1,\alpha}_{2\pi,\circ}$. The required result for $\mcc_d$
follows from (\ref{cade}). The corresponding result for
$\mcc_d^{-1}$ is obtained by the same argument, after observing that
one can write \be \mcc_d^{-1}=-\mcc+\tilde \mck_d,\ee where
$\tilde\mck_d$ is given by periodic convolution with a smooth
function.

\end{proof}

\begin{proof}[Proof of Lemma \ref{lcom}.] Using (\ref{Q}) and (\ref{c2}), we
write \be {\mathcal Q}_d(w) = {\mathcal Q}(w)+ \{w\,\mck_d
(w')-\mck_d(ww')\}.\label{star}\ee Since $\kappa_d$ is smooth, it is
easy to see from (\ref{c2}) that $\mck_d(w')$ and $\mck_d(ww')$ are
both smooth functions. The required result follows then from
(\ref{star}), provided that one can show that $\mathcal{Q}$ maps
$C^{p,\delta}_{2\pi}$ into $C^{p,\delta}_{2\pi}$ for any $\delta \in
(0,\alpha)$. This fact has been proved in \cite{BT} for $p=1$, and
in \cite{PT} for $p\geq 2$, the proof in \cite{PT} being in fact in
a more general setting.

 For the sake of completeness, we now give a self-contained proof of the fact that, for any $p\geq 1$
 integer, $\mathcal{Q}$ maps $C^{p,\delta}_{2\pi}$ into $C^{p,\delta}_{2\pi}$
for any $\delta \in (0,\alpha)$. Our proof relies on a commutator
estimate proved in Appendix B, which is more general and has a more
transparent proof than the results in \cite{PT}. Let $w\in
C^{p,\delta}_{2\pi}$, with $p\geq 1$ integer and $\alpha\in (0,1)$.
 Since $w^{(p)} \in C^{0,\alpha}_{2\pi}$, we may
interchange the operators $\mcc$ and differentiation to infer from
Leibniz's rule that
$$\Big(w\,\mcc(w^{(p)})-\mcc(ww^{(p)})\Big) -\,\pa_x^{p-1}\Big( w\,\mcc(w')-\mcc(ww') \Big)  \in C^{1,\alpha}_{2\pi}.$$
Thus it suffices to show that for any $\delta \in (0,\alpha)$ we
have \begin{equation}\label{hk}
\Big(w\,\mcc(w^{(p)})-\mcc(ww^{(p)})\Big)\in C^{1,\delta}_{2\pi}.
\end{equation}
However, estimate (\ref{hk}) is an immediate consequence of Lemma
B.1 in Appendix B. This completes the proof.
\end{proof}

\begin{remark}{\rm Since for $\alpha \in (0,1)$ the function
$$w: [-\pi,\pi] \to \bdr,\qquad w(x)=\frac{x}{\alpha+1}\,\Big(|x|^\alpha - \pi^\alpha\Big),$$
extends by periodicity to a function $w \in C^{1,\alpha}_{2\pi}$
with $w\,\mcc(w')-\mcc(ww') \not \in  C^{1,\alpha}_{2\pi}$ (see the
detailed estimates in \cite{PT}), in view of (\ref{c2}) we see that
the result of Lemma 3.2 cannot be improved to accommodate
$\delta=\alpha$.}\end{remark}

\section{Regularity}

The main result of this section concerns regularity of solutions of
(\ref{maineq}).
\begin{theorem}\label{reg} Let $h>0$ and $v \in C^{1,\alpha}_{2\pi}$ be a solution of {\rm
(\ref{maineq})} such that
\begin{equation}\label{sm}
 Q-2g\,v(x)\,>0 \qquad\text{for all }x\in\bdr.
\end{equation}
Then $v\in C^\infty_{2\pi}$.
\end{theorem}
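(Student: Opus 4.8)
The plan is to bootstrap the Hölder regularity of $v$ by playing the algebraic structure of equation (\ref{m0}) against the conformal description of the fluid domain. The first step is to bring in the conformal map: exactly as in the proof of the second (converse) part of Theorem \ref{refor}, the function $v$ determines a conformal map $f=U+iV:\mcr_{kh}\to\Om$, where $V$ is the harmonic extension of $v$ to $\mcr_{kh}$ vanishing on $\{y=-kh\}$ and $U$ is a harmonic conjugate of $V$; by Lemma \ref{lreg}, $U,V\in C^{1,\alpha}(\overline{\mcr_{kh}})$. Using the Cauchy-Riemann equations, $f'=U_x+iV_x=V_y+iV_x$, so on the upper boundary of the strip
\[
f'(x,0)=\mcg_{kh}(v)(x)+i\,v'(x),\qquad |f'(x,0)|^2=v'(x)^2+\mcg_{kh}(v)(x)^2.
\]
Equation (\ref{m0}) thus reads $\{\tfrac{m}{kh}+\gamma(\mcg_{kh}(v^2/2)-v\,\mcg_{kh}(v))\}^2=(Q-2gv)\,|f'(\cdot,0)|^2$, and the whole argument reduces to controlling the boundary regularity of $|f'(\cdot,0)|^2$, since $v'=\Im f'(\cdot,0)$.

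The second step is to extract from the left-hand side of (\ref{m0}) a gain of one derivative. Using (\ref{cone}) one computes $\mcg_{kh}(v^2/2)-v\,\mcg_{kh}(v)=\tfrac{[v^2]}{2kh}-\tfrac{1}{k}v-\mathcal{Q}_{kh}(v)$, with $\mathcal{Q}_{kh}$ the operator (\ref{Q}). Hence, if $v\in C^{p,\alpha}_{2\pi}$ with $p\ge 1$, Lemma \ref{lcom} gives $\mathcal{Q}_{kh}(v)\in C^{p,\delta}_{2\pi}$ for every $\delta\in(0,\alpha)$, so the left-hand side of (\ref{m0}) lies in $C^{p,\delta}_{2\pi}$ — one derivative better than what the Dirichlet-Neumann operator alone would provide. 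Since $Q-2gv\in C^{p,\alpha}_{2\pi}$ is bounded away from $0$ (by (\ref{sm}) and periodicity), dividing (\ref{m0}) by it yields ${v'}^2+\mcg_{kh}(v)^2\in C^{p,\delta}_{2\pi}$; in particular $|f'(\cdot,0)|^2\in C^{p,\delta}_{2\pi}$, and by (\ref{m3}) together with compactness it is bounded below by a positive constant.

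The third, and decisive, step is to ``unsquare'': to pass from $|f'(\cdot,0)|^2\in C^{p,\delta}_{2\pi}$ to $f'(\cdot,0)\in C^{p,\delta}_{2\pi}$. Since $f$ is a conformal map, $f'\ne 0$ throughout $\mcr_{kh}$; together with $f'(\cdot,0)\ne 0$ (which is exactly (\ref{m3})) and the continuity of $f'$ up to $\overline{\mcr_{kh}}$, this gives $f'\ne 0$ on $\overline{\mcr_{kh}}\cap\{y\ge-\varepsilon\}$ for some $\varepsilon>0$. For each $x_0\in\bdr$ one then takes a small half-disk $D$ centred at $(x_0,0)$, contained in $\overline{\mcr_{kh}}\cap\{y\ge-\varepsilon\}$, on which $f'$ admits a holomorphic logarithm $\log f'$, continuous up to the flat part of $\partial D$, with $\Re\log f'=\tfrac12\log({v'}^2+\mcg_{kh}(v)^2)$ there. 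Applying Lemma \ref{lreg} (which holds equally for half-disks lying below their flat boundary, by reflection) to the holomorphic function $i\log f'$, whose imaginary part is $\Re\log f'$, shows that $\log f'$, and hence $f'=e^{\log f'}$, is of class $C^{p,\delta}$ up to $\{y=0\}$ near $x_0$. Covering one period of $\{y=0\}$ by finitely many such half-disks yields $f'(\cdot,0)\in C^{p,\delta}_{2\pi}$, hence $v'=\Im f'(\cdot,0)\in C^{p,\delta}_{2\pi}$, i.e. $v\in C^{p+1,\delta}_{2\pi}$. Starting from $v\in C^{1,\alpha}_{2\pi}$ and iterating, with $(p,\delta)$ taking the role of $(1,\alpha)$ at each stage, one obtains $v\in C^{n}_{2\pi}$ for every integer $n$, that is, $v\in C^\infty_{2\pi}$.

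I expect the main obstacle to be the regularity gain encoded in Lemma \ref{lcom}: it is the only place where the particular algebraic form of the left-hand side of (\ref{m0}) is used, and without knowing that the combination there lands in $C^{p,\delta}_{2\pi}$ rather than merely in $C^{p-1,\alpha}_{2\pi}$, the two sides of the equation would carry the same, insufficient, regularity and the bootstrap could not even begin. A secondary point requiring care (routine once noticed) is the unsquaring: it relies on $f'$ being nowhere zero up to a neighbourhood of the upper boundary of the strip, i.e. on the non-degeneracy condition (\ref{m3}), so that a single-valued holomorphic logarithm of $f'$ exists locally and the local form of Privalov's theorem (Lemma \ref{lreg}) can be applied to it.
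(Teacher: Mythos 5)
Your proposal is correct, and its skeleton coincides with the paper's: both arguments hinge on rewriting $\mcg_{kh}(v^2/2)-v\,\mcg_{kh}(v)$ via (\ref{cone}) as $\tfrac{[v^2]}{2kh}-\tfrac{v}{k}-\mathcal{Q}_{kh}(v)$, invoking Lemma \ref{lcom} for the one-derivative gain, using (\ref{sm}) to divide by $Q-2gv$, and then recovering the regularity of $v'$ from that of $v'^2+\mcg_{kh}(v)^2$ before iterating with a shrinking H\"older exponent. Where you genuinely diverge is the ``unsquaring'' step. The paper works globally: it first shows $U_x+iV_x\neq 0$ on all of $\overline{\mcr_{kh}}$ (using conformality in the interior, (\ref{m3}) on the top, and the Hopf boundary-point lemma on the bed), then introduces the argument function $\theta$ normalized by $\theta(x,-kh)=0$, expresses its top trace as $\theta_0=\mcc_{kh}^{-1}\bigl(\log\,(u'^2+v'^2)^{1/2}-[\log\,(u'^2+v'^2)^{1/2}]\bigr)$, uses Lemma \ref{lpriv} for the boundedness of $\mcc_{kh}^{-1}$, and concludes from $u'=(u'^2+v'^2)^{1/2}\cos\theta_0$, $v'=(u'^2+v'^2)^{1/2}\sin\theta_0$. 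You instead localize near the top boundary: since $f'=U_x+iV_x$ is continuous up to $\overline{\mcr_{kh}}$ and nonzero on $\{y=0\}$ by (\ref{m3}), you take a holomorphic logarithm of $f'$ in small half-disks and apply the local Privalov result, Lemma \ref{lreg} (reflected so the flat edge lies on top), with boundary data $\tfrac12\log(v'^2+\mcg_{kh}(v)^2)$. This buys you something: you never need the bed, the Hopf lemma, or the mapping properties of $\mcc_{kh}^{-1}$ inside the bootstrap, and in fact you do not even need the injectivity conditions (\ref{pos})--(\ref{m2}), only the harmonic extension of $v$ and (\ref{m3}); the price is the (routine) patching of finitely many local $C^{p,\delta}$ estimates into a periodic global one and the reflection remark for Lemma \ref{lreg}. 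The paper's route, by contrast, yields the global conjugate-function identity (\ref{conj}) in one stroke, which is also of independent use. Both iterations close in the same way, so your argument is a sound alternative proof of Theorem \ref{reg}.
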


\begin{remark}{\rm Theorem \ref{reg} implies that, is $(\Om,\psi)$ is a solution
of (\ref{apb}) of class $C^{1,\alpha}$ with no stagnation points on
the free surface, then $\mcs$ is a $C^\infty$ curve and $\psi\in
C^\infty(\overline\Om)$. Indeed, by Theorem \ref{refor}, any such
solution gives rise to a solution of (\ref{maineq}), the absence of
stagnation points on $\mcs$ being equivalent to (\ref{sm}). By
Theorem \ref{reg}, $v \in C^{\infty}_{2\pi}$ and, by Lemma
\ref{lpriv}, $\mcc_{kh}(v-h)\in C^{\infty}_{2\pi}$. Hence $\mcs$,
being given by (\ref{paras}), is a $C^\infty$ curve, and the fact
that $\psi\in C^\infty(\overline\Om)$
 follows from standard elliptic regularity theory \cite{GT}.}
 \end{remark}

\begin{remark}{\rm Our proof of Theorem \ref{reg} is based on harmonic
analysis estimates and is relatively simple. Using different methods
which are somewhat more involved, the stronger conclusion that
$s\mapsto v(s)$ is real-analytic on $\bdr$ can be obtained. Indeed,
 once the $C^{2,\alpha}$ regularity of $\mcs$ and of $\psi$ in
 $\overline\Om$ is known (for example from Theorem
 4.1), the approach of Kinderlehrer, Nirenberg and Spruck \cite{KNS} is
applicable, as first observed by Constantin and Escher \cite{CE1},
showing that the curve $\mcs$ is real-analytic. Then a standard
result of regularity of conformal mappings up to the boundary
\cite{Po} shows that $U+iV$ has a holomorphic extension to a
neighbourhood of the real axis, implying in particular that
$s\mapsto v(s)$ is real-analytic on $\bdr$. An alternative proof of
this fact, based on ordinary differential equations in the complex
domain in the spirit of Lewy's theorem \cite{Le}, has been pointed
out to us by the anonymous referee of this paper.}
\end{remark}

\begin{proof}[Proof of Theorem \ref{reg}.]
Let $U,\, V$ be as in the proof of the second part of Theorem
\ref{refor}, with  $U,\, V\in C^{1,\alpha}(\overline{\mcr_{kh}})$.
As noted there, $U+iV$ is a conformal mapping from $\mcr_{kh}$ onto
$\Om$, and therefore \[ U_x+iV_x\neq 0\qquad\text{in
}\mcr_{kh}.\]Since $V>0$ in $\mcr_{kh}$ and $V(x,-kh)=0$ for all
$x\in\bdr$, it follows from Hopf boundary-point lemma \cite{GT} that
\[ V_y(x,-kh)>0\qquad\text{for all }x\in\bdr.\]Taking also into
account (\ref{m3}), we have thus obtained that \be U_x+iV_x\neq
0\qquad\text{in }\overline{\mcr_{kh}}.\label{noz}\ee One can
therefore write
$$U_x+iV_x=(U_x^2+V_x^2)^{1/2}\,e^{i\theta},$$
where $\theta \in C^{0,\alpha}(\overline{\mcr_h})$ is such that
\[\log\,(U_x^2+V_x^2)^{1/2}+i\theta\quad\text{ is holomorphic in $\mcr_{kh}$},\]
\[\theta(x,-kh)=0\quad\text{for all }x\in\bdr.\]
Let \be\label{reg1} \theta_0(x)=\theta(x,0),\qquad
u(x)=U(x,0),\qquad x\in\bdr.\ee Then \be
\mcc_{kh}(\theta_0)=\log\,(u'^2+v'^2)^{1/2}-[\log\,(u'^2+v'^2)^{1/2}],\ee
so that
\be\theta_0=\mcc_{kh}^{-1}\left(\log\,(u'^2+v'^2)^{1/2}-[\log\,(u'^2+v'^2)^{1/2}]\right).\label{conj}\ee
Since (\ref{sm}) holds, equation (\ref{m0}) can be written as
\begin{equation}\label{reg2}
(u'^2+v'^2)^{1/2}=\displaystyle\frac{\Big|\,\displaystyle\frac{m}{kh}
+\gamma\{\mathcal{G}_{kh}(v^2/2)-v\,\mathcal{G}_{kh}(v)\}\Big|}{(Q-2gv)^{1/2}}.
\end{equation}
Notice that in view of (\ref{cone}), (\ref{m1}) and (\ref{Q}), we
have \be
\mathcal{G}_{kh}(v^2/2)-v\,\mathcal{G}_{kh}(v)=\frac{[v^2]}{2kh}+\mcc_{kh}(vv')-\frac{v}{k}-v\,\mcc_{kh}(v')=
\frac{[v^2]}{2kh}-\frac{v}{k}-\mathcal{Q}_{kh}(v). \label{reg3}\ee
Since $v\in C^{1,\alpha}_{2\pi}$, Lemma \ref{lcom} ensures that
$\mathcal{Q}_{kh}(v) \in C^{1,\,\alpha/2}_{2\pi}$ and from
(\ref{reg2}) and (\ref{reg3}) we infer that $(u'^2+v'^2) \in
C^{1,\alpha/2}_{2\pi}$. Using this in (\ref{conj}), Lemma
\ref{lpriv} ensures now that $\theta_0 \in C^{1,\alpha/2}_{2\pi}$.
But
$$\begin{cases}
u' = (u'^2+v'^2)^{1/2}\,\cos\,\theta_0,\\
v'= (u'^2+v'^2)^{1/2}\,\sin\,\theta_0,
\end{cases}$$
so that $u,\,v \in C^{2,\,\alpha/2}_{2\pi}$. Implementing this
procedure $p$ times we obtain that
$$u,\,v \in C^{1+p,\,\alpha/2^p}_{2\pi},\qquad p \ge 1.$$
Consequently $v\in C^\infty_{2\pi}$. This completes the
proof.\end{proof}

\section{Local bifurcation}

In this section we prove the existence of solutions of
(\ref{maineq}). Because of (\ref{m1}), it is natural to put \be v=
w+h,\ee and rewrite (\ref{maineq}), taking into account
(\ref{cone}), as
\begin{subequations}\label{naineq}
\begin{align}&\left\{\frac{m}{kh} +\gamma \left(
\frac{[w^2]}{2kh}-\frac{w}{k}-\frac{h}{2k} +\mcc_{kh}(ww')-w\mcc_{kh}(w')\right)\right\}^2=\label{n0}\\
&\qquad\qquad=(Q-2gh-2gw)\,\left\{w'^2 +
\left(\frac{1}{k}+\mcc_{kh}(w')\right)^2\right\},\non\\&
[w]=0,\label{n1}\\
&w(x)>-h\quad\text{for all }x\in\bdr,\label{os}\\ &\text{the mapping
$x\mapsto \left(\frac{x}{k}+\mcc_{kh}(w)(x), w(x)+h\right)$ is
injective on $\bdr$},\label{n2}\\&
w'(x)^2+\left(\frac{1}{k}+\mcc_{kh}(w')(x)\right)^2\neq
0\qquad\text{for all }x\in\bdr,\label{n3}
\end{align}
\end{subequations}
We prove, for each $\gamma\in\bdr$, $k>0$ and $h>0$ fixed, and
regarding $m$ and $Q$ as parameters, the existence of solutions
$w\in C^{1,\alpha}_{2\pi}$ of (\ref{naineq}). Note that, if \be
\frac{1}{k}+\mcc_{kh}(w')(x)>0\qquad\text{ for all
}x\in\bdr,\label{gra}\ee then (\ref{n2}) and (\ref{n3}) are
automatically satisfied and, moreover, the corresponding free
boundary $\mcs$ is the graph of a function.
  Motivated by the remark that
 $w$ sufficiently small in $C^{1,\alpha}_{2\pi}$ ensures the
 validity of
(\ref{os}) and (\ref{gra}), we concentrate on proving the existence
of small-amplitude solutions $w\in C^{1,\alpha}_{2\pi,\circ}$ of
(\ref{n0}). Of course, any study of large-amplitude solutions of
(\ref{n0})-(\ref{n1}) will need to discuss also the validity of
(\ref{os})-(\ref{n3}), which is necessary for the construction of
solutions of (\ref{apb}).

Note that $w=0\in C^{1,\alpha}_{2\pi,\circ}$ is a solution of
(\ref{n0}) if and only if \be Q=2gh + \Big( \frac{m}{h}-\frac{\gamma
h}{2}\Big)^2. \label{zar}\ee This suggests setting
\begin{subequations}
\begin{align}\label{lb4}
\lambda&=\frac{m}{h}-\frac{\gamma h}{2},\\
 \mu&= Q-2gh -\Big( \frac{m}{h}-\frac{\gamma h}{2}\Big)^2.\label{lb1}\end{align}
 \end{subequations}
 The mapping $(m, Q)\mapsto(\lambda,\mu)$ is a bijection
 from $\bdr^2$ onto itself. Equations (\ref{n0})-(\ref{n1}) can be rewritten as
 \begin{align}&\left\{\frac{\lambda}{k} +\gamma \left(
\frac{[w^2]}{2kh}-\frac{w}{k} +\mcc_{kh}(ww')-w\mcc_{kh}(w')\right)\right\}^2=\label{newe}\\
&\qquad\qquad=(\lambda^2+\mu-2gw)\,\left\{w'^2 +
\left(\frac{1}{k}+\mcc_{kh}(w')\right)^2\right\},\non\end{align}
with $w\in C^{1,\alpha}_{2\pi,\circ}$, $\mu\in\bdr$ and
$\la\in\bdr$. Note that $w=0\in C^{1,\alpha}_{2\pi,\circ}$ and
$\mu=0$ is a solution of (\ref{newe}) for every $\la\in\bdr$.

To prove the existence of solutions of (\ref{newe}) we apply the
Crandall-Rabinowitz theorem \cite{CR} on bifurcation from a simple
eigenvalue. For a linear operator $\mcl$ between two Banach spaces,
let us denote by ${\mathcal N}(\mcl)$ its null space and by
${\mathcal R}(\mcl)$ its range.\medskip

\begin{theorem}[{\bf Local bifurcation theorem} \cite{CR}] Let $\mcx$ and $\mcy$
be Banach spaces, $I$ an open interval in ${\mathbb R}$ containing
$\lambda^\ast$, and $F \in C(I \times \mcx,\mcy)$. Suppose that

(i) $F(\lambda,0) = 0$ for all $\lambda \in I$;

(ii) $\pa_\lambda F$, $\pa_u F$, and $\pa^2_{\lambda,u} F$ exist and
are continuous;

(iii) ${\mathcal N}(\pa_u F(\lambda^\ast,0))$ and $\mcy/{\mathcal
R}(\pa_u F(\lambda^\ast,0))$ are one-dimensional, with the nullspace
generated by $u^\ast$;

(iv) the transversality condition $\pa^2_{\lambda,u}
F(\lambda^\ast,0)\,(1,u^\ast) \not \in {\mathcal R}(\pa_u
F(\lambda^\ast,0))$ holds.

Then there exists a continuous local bifurcation curve
$\{(\lambda(s),u(s)) :\  |s| < \varepsilon\}$ with $\varepsilon >$ 0
sufficiently small such that $(\lambda(0),u(0)) = (\lambda^\ast, 0)$
and
$$\{(\lambda,u) \in {\mathcal O}:\ u \neq 0,\ F(\lambda,u)=0\}= \{(\lambda(s),u(s)) :\ 0< |s| < \varepsilon \}$$
for some neighborhood ${\mathcal O}$ of $(\lambda^\ast, 0) \in I
\times \mcx$. Moreover, we have
$$u(s) = su^\ast + o(s)\quad\hbox{in}\quad \mcx,\ |s| < \varepsilon,$$
and if $\pa^2_{u}F$ is also continuous, then the curve is of class
$C^1$, while for $F$ of class $C^k$ ($k \ge 2$) or real-analytic, $s
\mapsto u(s)$ is of class $C^{k-1}$, respectively real-analytic.
\end{theorem}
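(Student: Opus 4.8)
The assertion is the classical Crandall--Rabinowitz theorem, and the plan is to prove it by a Lyapunov--Schmidt reduction followed by two applications of the implicit function theorem. Write $\mcl=\pa_u F(\la^\ast,0)$ for the linearization. By (iii) the nullspace $\mcn(\mcl)=\bdr u^\ast$ has a closed complement $\mcx_0$ in $\mcx$, and the range $\mcr(\mcl)$ is closed of codimension one, hence has a one-dimensional complement $\mcy_1$ in $\mcy$; let $P\colon\mcy\to\mcr(\mcl)$ be the projection with kernel $\mcy_1$ and put $Q=I-P$. Every $u\in\mcx$ near $0$ decomposes uniquely as $u=su^\ast+w$ with $s\in\bdr$ and $w\in\mcx_0$, and $F(\la,u)=0$ is equivalent to the pair consisting of the \emph{auxiliary equation} $PF(\la,su^\ast+w)=0$ and the \emph{bifurcation equation} $QF(\la,su^\ast+w)=0$.

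First I would solve the auxiliary equation. The map $(\la,s,w)\mapsto PF(\la,su^\ast+w)$ vanishes at $(\la^\ast,0,0)$, and its $w$-derivative there is $P\mcl|_{\mcx_0}=\mcl|_{\mcx_0}$, which by (iii) is an isomorphism of $\mcx_0$ onto $\mcr(\mcl)$. Since the continuity of $\pa_\la F$ and $\pa_u F$ in (ii) makes this map $C^1$, the implicit function theorem yields a locally unique solution $w=w(\la,s)$ with $w(\la^\ast,0)=0$ and with the regularity inherited from $F$; by (i) and uniqueness, $w(\la,0)=0$ for all $\la$ near $\la^\ast$, hence $\pa_\la w(\la^\ast,0)=0$. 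Differentiating the auxiliary equation in $s$ at $(\la^\ast,0)$ gives $P\mcl\big(u^\ast+\pa_s w(\la^\ast,0)\big)=0$; since $\mcl u^\ast=0$ and $P$ is the identity on $\mcr(\mcl)$ this forces $\mcl\,\pa_s w(\la^\ast,0)=0$, and as $\pa_s w(\la^\ast,0)\in\mcx_0$ we conclude $\pa_s w(\la^\ast,0)=0$.

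Next I would reduce the bifurcation equation. Set $\Phi(\la,s)=QF(\la,su^\ast+w(\la,s))\in\mcy_1\cong\bdr$. Since $F(\la,0)=0$ and $w(\la,0)=0$ we have $\Phi(\la,0)\equiv0$, so one may write $\psi(\la,s)=\int_0^1\pa_s\Phi(\la,ts)\,dt$; then for $s\ne0$ the equation $\Phi(\la,s)=0$ is equivalent to $\psi(\la,s)=0$, and $\psi(\la,0)=\pa_s\Phi(\la,0)$. Using $Q\mcl=0$ and $\pa_s w(\la^\ast,0)=0$ one gets $\psi(\la^\ast,0)=\pa_s\Phi(\la^\ast,0)=Q\mcl\big(u^\ast+\pa_s w(\la^\ast,0)\big)=0$. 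Differentiating once more and discarding every term carrying a factor $\pa_\la w(\la^\ast,0)=0$, $\pa_s w(\la^\ast,0)=0$, or $Q\mcl=0$ leaves
\[\pa_\la\psi(\la^\ast,0)=\pa_\la\pa_s\Phi(\la^\ast,0)=Q\,\pa^2_{\la,u}F(\la^\ast,0)(1,u^\ast),\]
which is nonzero precisely because of the transversality condition (iv). A final application of the implicit function theorem to $\psi$ — for which it suffices that $\psi$ be continuous with $\pa_\la\psi$ continuous and nonvanishing near $(\la^\ast,0)$, both guaranteed by (ii) — yields a unique continuous branch $\la=\la(s)$, $|s|<\veps$, with $\la(0)=\la^\ast$; then $u(s)=su^\ast+w(\la(s),s)$ is the desired curve. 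The uniqueness statement follows because every solution near $(\la^\ast,0)$ is forced successively by the two reductions onto this branch; $u(s)=su^\ast+o(s)$ because $w(\la^\ast,0)=0$ and $\frac{d}{ds}w(\la(s),s)\big|_{s=0}=\pa_\la w(\la^\ast,0)\la'(0)+\pa_s w(\la^\ast,0)=0$; and the $C^1$, $C^{k-1}$, or real-analytic regularity of $s\mapsto u(s)$ is inherited from that of $F$ through the two implicit function theorems, the single lost derivative coming only from the division by $s$ (for analytic $F$ this division produces an analytic function by removability, so nothing is lost).

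I expect the main obstacle to be the careful bookkeeping in this last step: verifying that $\psi$ genuinely retains the claimed regularity despite the factor $1/s$, and checking that in $\pa_\la\pa_s\Phi(\la^\ast,0)$ every contribution other than $Q\,\pa^2_{\la,u}F(\la^\ast,0)(1,u^\ast)$ vanishes — which is exactly where the three identities $w(\la,0)=0$, $\pa_s w(\la^\ast,0)=0$, and $Q\mcl=0$ are used, and where the Fredholm structure in (iii) and the continuity of the various derivatives in (ii) earn their keep.
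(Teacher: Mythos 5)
The paper itself gives no proof of this statement: it is quoted verbatim as the Crandall--Rabinowitz theorem with a citation to \cite{CR}, so the comparison must be with the original argument there. Your route — a two-step Lyapunov--Schmidt reduction (solve the auxiliary equation for $w(\la,s)$, then divide the bifurcation function by $s$) — is the familiar textbook proof, but it does not work under the hypotheses as stated, and the breakdown is precisely at the step you flag as the "main obstacle" and then wave through: the claim that continuity of $\psi$ and of $\pa_\la\psi$ near $(\la^\ast,0)$ is "guaranteed by (ii)". Hypothesis (ii) provides $\pa_\la F$, $\pa_u F$, $\pa^2_{\la,u}F$ only, \emph{not} $\pa^2_u F$. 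Your reduced function is $\psi(\la,s)=s^{-1}QF(\la,su^\ast+w(\la,s))$, and because the inner argument depends on $\la$ through $w(\la,s)$, differentiating in $\la$ and establishing continuity of $\pa_\la\psi$ up to $s=0$ forces terms of the type $\pa_u\bigl(\pa_u F\bigr)$ acting on $\pa_\la w$ (for instance, $s^{-1}Q\,\pa_uF(\la,su^\ast+w)\,\pa_\la w(\la,s)$ must be shown to tend to $0$, which without a second $u$-derivative or a Lipschitz bound on $\pa_uF$ in $u$ cannot be controlled; likewise $\pa_\la\psi$ need not even exist at points $(\la,0)$ with $\la\neq\la^\ast$, since it involves a mixed second derivative of $w$). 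Your cancellations at the single point $(\la^\ast,0)$ are correct, but the implicit function theorem needs $\pa_\la\psi$ to exist in a neighborhood and be continuous at the point, and that is exactly what (ii) does not deliver. So as written the proof establishes the theorem only under the stronger assumption $F\in C^2$ (enough for this paper's application, where $F$ is real-analytic, but not for the statement).

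The original proof in \cite{CR} is structured to avoid this: one divides by $s$ \emph{before} any implicit function theorem. Define, on $\bdr\times I\times Z$ with $Z$ a closed complement of $\mcn(\mcl)$,
\[
g(\la,s,z)=\int_0^1 \pa_u F\bigl(\la,\,t\,s\,(u^\ast+z)\bigr)(u^\ast+z)\,dt ,
\]
so that $F(\la,s(u^\ast+z))=s\,g(\la,s,z)$ by (i). Because the inner argument $ts(u^\ast+z)$ does not depend on $\la$ and depends on $z$ linearly with a factor $s$ that cancels the $s^{-1}$, one gets $\pa_z g(\la,s,z)=\pa_uF(\la,s(u^\ast+z))$ and $\pa_\la g(\la,s,z)=\int_0^1\pa^2_{\la,u}F(\la,ts(u^\ast+z))(1,u^\ast+z)\,dt$, both continuous using only (ii); no $\pa^2_uF$ ever appears. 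At $(\la^\ast,0,0)$ one has $g=\mcl u^\ast=0$ and the joint derivative $(\mu,w)\mapsto\mu\,\pa^2_{\la,u}F(\la^\ast,0)(1,u^\ast)+\mcl w$ is an isomorphism of $\bdr\times Z$ onto $\mcy$ by (iii) and (iv); a single application of the implicit function theorem (in the version needing only continuity in the parameter $s$) then yields $\la(s),z(s)$ and $u(s)=s(u^\ast+z(s))$, together with the local uniqueness and the asymptotics $u(s)=su^\ast+o(s)$. If you want to keep your Lyapunov--Schmidt architecture, you must either strengthen (ii) to $F\in C^2$ or reorganize the second step along these lines; as it stands, the regularity bookkeeping for $\psi$ is a genuine gap, not a technicality.
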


Let us denote, for every integer $p\ge 0$,
\begin{align}C^{p,\alpha}_{2\pi, e}=\{ f\in C^{p,\alpha}_{2\pi}:
f(x)=f(-x)\text{ for all }x\in\bdr\},\\
C^{p,\alpha}_{2\pi,\circ, e}=\{ f\in C^{p,\alpha}_{2\pi,\circ}:
f(x)=f(-x)\text{ for all }x\in\bdr\}.
\end{align}
To apply the local bifurcation theorem to (\ref{newe}), let \be
\mcx=\bdr\times C^{p+1,\alpha}_{2\pi,\circ, e},\qquad
\mcy=C^{p,\alpha}_{2\pi, e},\label{bsp}\ee for some $p\ge 0$. Then
we can write (\ref{newe}) as $F(\lambda,(\mu, w))=0$ with $F: \bdr
\times \mcx \to \mcy$ real-analytic given by \begin{align}
F(\lambda,(\mu, w))
\,=&\gamma^2\,\left(\,\mcc_{kh}(ww')-w\,\mathcal{C}_{kh}(w')-\frac{w}{k}+\frac{[w^2]}{2kh}\right)^2
\\&+\frac{2\lambda\gamma}{k}\,\left(\,\mcc_{kh}(ww')-w\,\mathcal{C}_{kh}(w')-\frac{w}{k}+\frac{[w^2]}{2kh}\right)
\non\\
&+\, (2gw-\mu)\, \left(\frac{1}{k^2}+
w'^2+\frac{2}{k}\,\mathcal{C}_{kh}(w')+\Big(\mcc_{kh}(w')\Big)^2\right)\non\\
&- \lambda^2\, \left(
w'^2+\frac{2}{k}\,\mathcal{C}_{kh}(w')+\Big(\mcc_{kh}(w')\Big)^2\right).\non
\end{align}
As noted earlier, $F(\lambda,(0,0))=0$ for all $\lambda\in\bdr$.
Taking into account Lemma \ref{lpriv}, we easily compute
\begin{equation}\label{lb5}
\pa_{(\mu,w)}\,F(\lambda,(0,0))\,(f,\nu)
=\frac{2}{k^2}\Big((g-\lambda\gamma)f-\lambda^2k\mcc_{kh}(f')\Big)-\frac{\nu}{k^2},\qquad
(f,\nu) \in \mcx.
\end{equation}

It follows from the representation (\ref{Cd}) and Lemma \ref{lpriv}
that the bounded linear operator
$\pa_{(\mu,w)}\,F(\lambda,(0,0)):\mcx\to \mcy$ is invertible
whenever $\lambda$ does \emph{not} satisfy \be\lambda^2
nk\coth(nkh)= g-\lambda\gamma,\label{bif}\ee for any integer $n\ge
1$. Hence all potential bifurcation points for (\ref{newe}) are to
be found among the solutions of (\ref{bif}) for some integer $n\ge
1$.

Suppose now that $\la^*$ is a solution of (\ref{bif}) for some
integer $n\ge 1$. (Observe that (\ref{bif}) has exactly two
different solutions for each $n\ge 1$.) Using again the
representation (\ref{Cd}) and Lemma \ref{lpriv}, it follows that $
{\mathcal N}(\pa_{(\mu,w)} F(\lambda^\ast,(0,0)))$ is
one-dimensional and generated by $(0, w^*)\in \mcx$, where
$w^*(x)=\cos (nx)$ for all $x\in\bdr$, while ${\mathcal
R}(\pa_{(\mu,w)} F(\lambda^\ast,(0,0)))$ is the closed subspace of
$\mcy$ formed by the functions $f \in \mcy$ satisfying
$$\int_{-\pi}^{\pi} f(x)\,\cos(nx)\,dx=0,$$
so that $\mcy/{\mathcal R}(\pa_{(\mu,w)} F(\lambda^\ast,(0,0)))$ is
the one-dimensional subspace of $\mcy$ generated by the function
$w^\ast(x)= \cos(nx)$. Using (\ref{lb5}), we now compute
$$\pa^2_{\lambda,(\mu,w)}\,F(\lambda^\ast,(0,0)) \,(1,(0,w^\ast))=\frac{2}{k^2}(-\gamma -
2\lambda^\ast nk\coth(nkh))\,w^\ast \not \in {\mathcal
R}(\pa_{(\mu,w)} F(\lambda^\ast,(0,0)))$$ since, using (\ref{bif}),
we have
$$-\gamma - 2\lambda^\ast nk\coth(nkh)=-\,\lambda^\ast\,\Big(nk\coth(nkh)+\frac{g}{(\lambda^\ast)^2}\Big)
\neq 0.$$ Therefore, by the local bifurcation theorem, the solutions
of (\ref{bif}) for any integer $n\ge 1$ are bifurcation points.
Motivated by the quest for solutions of (\ref{newe}) of
\emph{minimal} period $2\pi$, we take $n=1$ in (\ref{bif}),
obtaining the bifurcation values
\begin{equation}
\lambda_{\pm}=-\,\frac{\gamma\tanh(kh)}{2k} \pm
\displaystyle\sqrt{\frac{
\gamma^2\tanh^2(kh)}{4k^2}+g\,\frac{\tanh(kh)}{k}}\label{lb6}
\end{equation}
The corresponding values of $m$ are obtained from (\ref{lb4}),
\begin{equation}\label{lb7}
m_{\pm} =\frac{\gamma h^2}{2}-\,\frac{\gamma h\tanh(kh)}{2k}\pm
h\,\displaystyle\sqrt{\frac{
\gamma^2\tanh^2(kh)}{4k^2}+g\,\frac{\tanh(kh)}{k}}
\end{equation}

The existence of water waves of small amplitude is now immediate.

\begin{theorem}\label{bifu} Given $h>0$, $k>0$, $\gamma \in \bdr$
and $m \in \bdr$ there exist laminar flows\footnote{In the sense
that the water flows in layers parallel to the flat bed (with zero
vertical fluid velocity).} with a flat free surface in water of
depth $h$, of constant vorticity $\gamma$ and relative mass flux
$m$. Moreover, the values $m_\pm$ of the flux given by {\rm
(\ref{lb7})} trigger the appearance of periodic steady waves of
small amplitude, with period ${2\pi}/k$ and conformal mean depth
$h$, which have a smooth profile with one crest and one trough per
period, monotone between consecutive crests and troughs and
symmetric about any crest line. The laminar flows of flux $m_\pm$
are exactly those with horizontal speeds at the flat free surface
$\lambda_\pm$ given by {\rm (\ref{lb6})}.

\end{theorem}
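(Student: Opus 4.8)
The plan is to assemble Theorem~\ref{bifu} from the machinery already in place, treating its three assertions separately. \emph{The laminar flows.} For $w=0$ the reformulated system \eqref{maineq} reduces, via \eqref{dnc}, to the single scalar relation \eqref{zar} between $Q$, $h$, $m$, $\gamma$, $k$; given $h,k,\gamma,m$ one simply \emph{defines} $Q$ by \eqref{zar}, and then $(h, w\equiv 0)$ solves \eqref{maineq}. By the converse half of Theorem~\ref{refor} this yields a genuine solution $(\Om,\psi)$ of \eqref{apb}; its free surface $\mcs$ is, by \eqref{paras} with $v\equiv h$ and $\mcc_{kh}(0)=0$, the horizontal line $Y=h$, so the flow is laminar with a flat free surface in water of conformal mean depth $h$ (which for a horizontal strip equals the ordinary depth). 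The horizontal speed at the flat surface is read off from the velocity-field formula displayed after the proof of Theorem~\ref{refor}, or more directly from \eqref{lb4}: it equals $\lambda = m/h - \gamma h/2$, which is $\lambda_\pm$ precisely when $m = m_\pm$ by \eqref{lb4}--\eqref{lb6}--\eqref{lb7}. This gives the last sentence of the theorem as well.

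\emph{The bifurcating waves.} Here I invoke the Crandall--Rabinowitz theorem as stated, applied to the map $F:\bdr\times\mcx\to\mcy$ with $\mcx,\mcy$ as in \eqref{bsp} for, say, $p=1$, at the bifurcation value $\lambda^\ast=\lambda_+$ (and separately $\lambda_-$) obtained by taking $n=1$ in \eqref{bif}. All four hypotheses of the theorem have in effect been verified in the text preceding the statement: $F(\lambda,(0,0))=0$ for all $\lambda$ is hypothesis~(i); $F$ is real-analytic, giving (ii) and the smoothness/analyticity of the branch; the computation of $\mcn(\pa_{(\mu,w)}F(\lambda^\ast,(0,0)))$ as the line spanned by $(0,w^\ast)$ with $w^\ast(x)=\cos(kx)$ (taking minimal period $2\pi$, i.e.\ $n=1$) and of the one-dimensional cokernel is hypothesis~(iii), where the even-function spaces in \eqref{bsp} are exactly what forces the kernel to be one-dimensional (killing the $\sin$ mode); and the transversality computation $\pa^2_{\lambda,(\mu,w)}F(\lambda^\ast,(0,0))(1,(0,w^\ast)) = \tfrac{2}{k^2}\bigl(-\gamma-2\lambda^\ast k\coth(kh)\bigr)w^\ast$, shown to be a nonzero multiple of $w^\ast$ and hence outside the range, is hypothesis~(iv). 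The theorem then produces, for $|s|<\veps$, a real-analytic curve $s\mapsto(\lambda(s),(\mu(s),w(s)))$ with $w(s)=s\cos(kx)+o(s)$ in $C^{2,\alpha}_{2\pi,\circ,e}$, solving \eqref{newe}; translating back through \eqref{lb4}--\eqref{lb1} gives the corresponding values of $(m,Q)$, with $m(0)=m_\pm$.

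\emph{From solutions of \eqref{newe} to genuine water waves, and their qualitative properties.} For $s\neq 0$ small, $w(s)$ is small in $C^{1,\alpha}_{2\pi}$, so conditions \eqref{os} and \eqref{gra} hold by continuity (since at $w=0$ the relevant quantities are $h>0$ and $1/k>0$), and as remarked in the text \eqref{gra} forces \eqref{n2} and \eqref{n3}; thus $v=w(s)+h$ satisfies the full system \eqref{maineq}, and Theorem~\ref{refor} (converse direction) delivers a solution $(\Om,\psi)$ of \eqref{apb} of period $2\pi/k$ and conformal mean depth $h$. Moreover $Q-2gv>0$ for $s$ small since at $s=0$ one has $Q-2gh=\lambda^2>0$ by \eqref{zar}; hence Theorem~\ref{reg} applies and $v\in C^\infty_{2\pi}$, so by the Remark following Theorem~\ref{reg} the profile $\mcs$ given by \eqref{paras} is a smooth curve. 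Finally the shape assertions follow from the leading-order expansion: since $v(x)=h+s\cos(kx)+o(s)$ with the error small in $C^1$, for $s\neq 0$ small $v$ has exactly one nondegenerate maximum and one nondegenerate minimum per period and is strictly monotone in between (the derivative $v'(x)=-sk\sin(kx)+o(s)$ changes sign only near $x=0,\pi$), giving one crest and one trough per period with monotonicity between them; and because $w(s)\in C^{2,\alpha}_{2\pi,\circ,e}$ is even, $v$ is even, hence symmetric about the crest line $x=0$ — and by periodicity about any crest line. I expect the only point requiring genuine care, rather than bookkeeping, to be the passage from the abstract branch to admissible water waves: one must check that smallness of $w(s)$ in $C^{1,\alpha}_{2\pi}$ really does secure \eqref{os}--\eqref{n3} and the strict inequality \eqref{sm} simultaneously and uniformly for $0<|s|<\veps$, and that the Crandall--Rabinowitz curve, a priori only local in $\mcx$, is being used only on the interval where all these open conditions persist; everything else is a transcription of computations already carried out above.
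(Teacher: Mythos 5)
Your proposal is correct and follows essentially the same route as the paper: trivial laminar solutions with $Q$ given by (\ref{zar}), the Crandall--Rabinowitz theorem applied at the $n=1$ solutions of (\ref{bif}) in the even spaces (\ref{bsp}) with $p\ge 1$, smallness of $w(s)$ to secure (\ref{os}) and (\ref{gra}) and hence genuine water waves via Theorem \ref{refor}, and the second-order expansion plus evenness for the crest/trough, monotonicity and symmetry claims; your explicit verification of (\ref{sm}) so as to invoke Theorem \ref{reg} is a slightly fuller justification of smoothness than the paper spells out. The only slip is notational: in the $2\pi$-periodic variable $x$ of (\ref{newe}) the kernel generator and leading term are $\cos(x)$ rather than $\cos(kx)$, the factor $k$ entering only through the rescaling to the physical period $2\pi/k$.
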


\begin{proof}[Proof of Theorem \ref{bifu}.] The function $w=0$ satisfies (\ref{naineq}) for any value of $m\in\bdr$,
provided that $Q$ is given by (\ref{zar}). These solutions
correspond to laminar flows in the fluid domain bounded below by the
rigid bed $\mcb$ and above by the free surface $Y=h$, with stream
function
\[\psi(X,Y)=-\frac{\gamma}{2}Y^2+\left(\frac{m}{h}+\frac{\gamma
h}{2}\right)Y-m, \qquad X\in\bdr, 0\leq Y\leq h,\] and velocity
field
\begin{equation}\label{lb8}
(\psi_Y,-\psi_X)=\Big(- \gamma Y+\frac{m}{h}+\frac{\gamma h}{2}
,0\Big), \qquad X\in\bdr, 0\leq Y\leq h.
\end{equation}
Observe that for these flows the horizontal velocity at the free
surface is indeed $\la$ given by (\ref{lb4}).

Consider now the local bifurcation curve
$$\{(\lambda(s), (0+o(s), s\cos(x)+o(s))):\ |s| < \varepsilon\} \subset\bdr\times \mcx$$
of solutions of (\ref{newe}), issuing from the points
$(\lambda_{\pm},(0,0))$, with $\lambda_\pm$ given by (\ref{lb6}). If
$\varepsilon>0$ is chosen small enough, then conditions (\ref{os})
and (\ref{gra}) are satisfied, where we have used Lemma \ref{lpriv}.
Therefore the corresponding non-flat free surface $\mcs$ given by
(\ref{paras}) with
$$v=w+h,$$
is the graph of a smooth function, symmetric with respect to the
points corresponding to $x=n\pi$, $n\in\bdz$. If we choose $p$ in
(\ref{bsp}) such that $p\geq 1$, then, since \be
w(x;s)=s\cos(x)+o(s)\qquad\text{in }C^{p+1,\alpha}_{2\pi},\ee we can
ensure that
\[ s w'(x;s)<0 \quad\text{for all }x\in (0,\pi), 0<|s|<\veps, \]
by choosing $\varepsilon>0$ suitably small. Together with the
evenness of $x\mapsto w(x;s)$, this proves the $\mcs$ has one crest
and one trough per minimal period and is monotone between
consecutive crests and troughs.

\end{proof}

Let us consider the family of laminar flows from which bifurcation
of small-amplitude periodic waves was shown to occur. It follows
from (\ref{lb8}) that they have the form \be
(\psi_Y,-\psi_X)=\Big(\lambda_\pm+\gamma(h-Y)
 ,0\Big), \qquad X\in\bdr,\  0\leq
Y\leq h.\ee where $\la_{\pm}$ is given by (\ref{lb6}). The formula
(\ref{lb6}), giving the speed $\la_\pm$ at the free surface in terms
of the depth $h$, period $2\pi/k$ and vorticity $\gamma$, is called
the \emph{dispersion relation}. As far as we are aware, its first
occurrences in the literature are in \cite{Th} and \cite{Bi}, where
it was obtained by formal arguments.

A remarkable feature of some of these flows is that they contain
stagnation points.
 Since the flows are laminar, stagnation
points, if present, form horizontal lines. An elementary analysis,
similar to that in \cite{W}, shows that small-amplitude waves
bifurcating from these flows have a critical layer. The streamline
pattern is considerably different in the case when stagnation points
are present compared to the case when they are not. A typical case
of a flow without stagnation points is depicted in Figure 2 (see the
discussion in \cite{CS}).

\bigskip\bigskip

\begin{center}
\includegraphics[width=9cm]{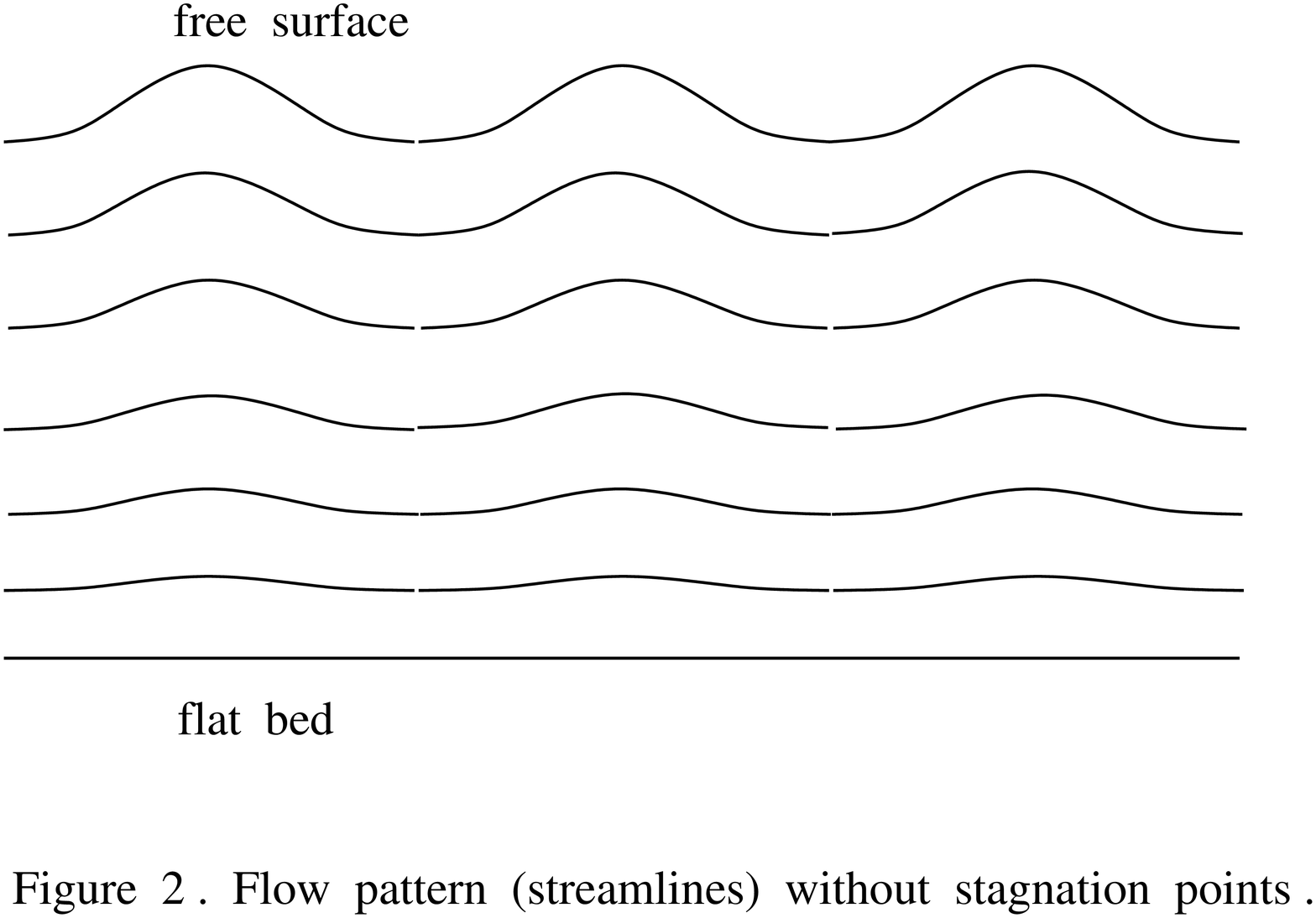}
\end{center}

\bigskip\bigskip

\noindent A picture of the streamline pattern if the flow presents
stagnation points is provided in Figure 3: the `cat's eye' flow
pattern of Thomson (Lord Kelvin) \cite{Ke} is highly typical for
this situation (see the discussion in \cite{W}).

\bigskip\bigskip

\begin{center}
\includegraphics[width=9cm]{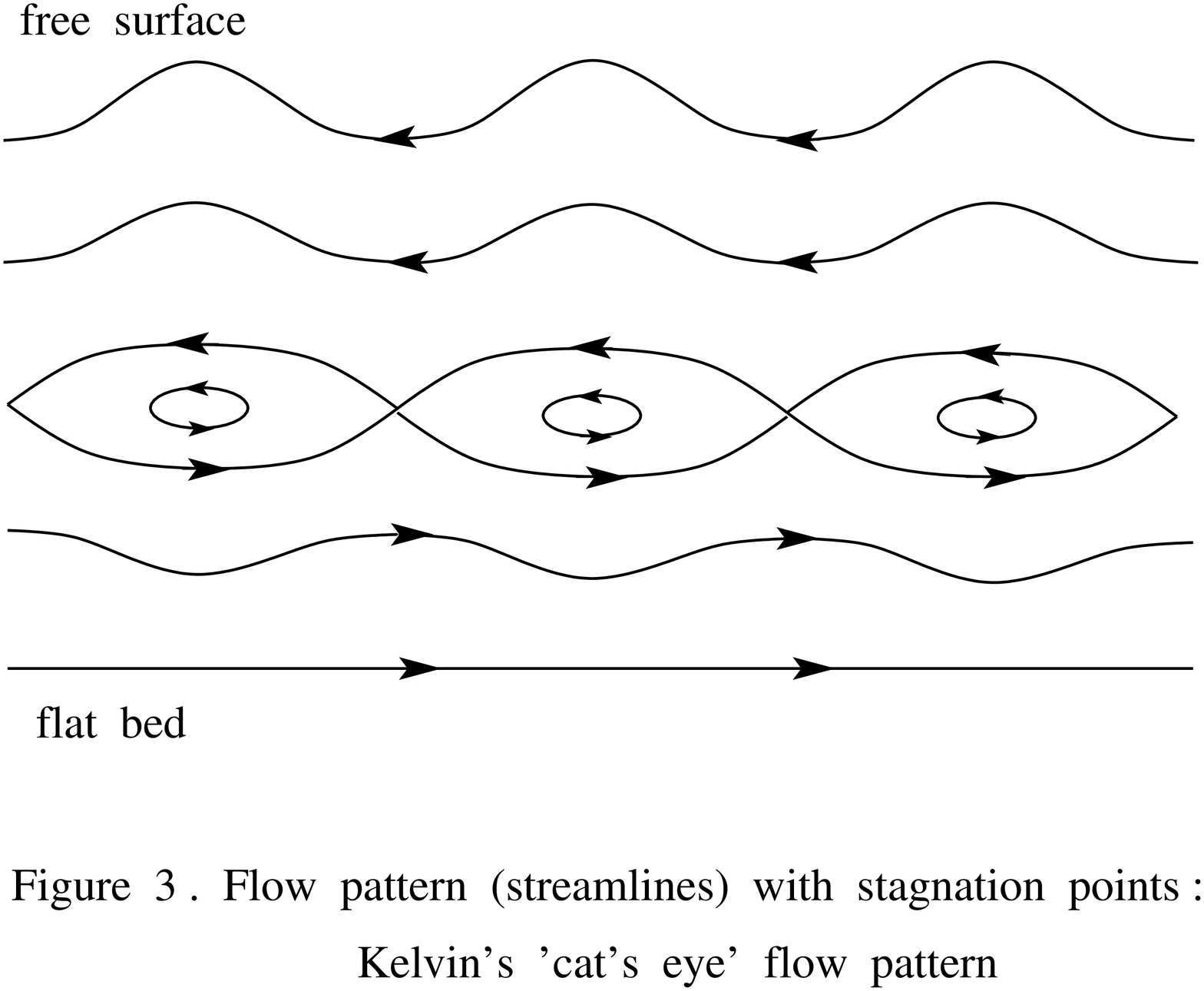}
\end{center}

\bigskip\bigskip

Notice that $\la_+>0$ and $\la_-<0$ irrespective of the values of
$h$, $k$ and $\gamma$, so stagnation points can never occur at the
free surface of the bifurcation-inducing laminar flows. This implies
that in the irrotational case ($\gamma=0$) stagnation points do not
occur anywhere in these flows. In the general case stagnation points
occur if and only if the equation
$$\lambda_\pm+\gamma(h-Y)=0$$
has a solution $Y$ in $[0,h]$. It is easy to see that this is the
case if and only if \be \label{wta}\la_\pm(\la_\pm+\gamma h)\leq
0.\ee

Suppose first that $\gamma >0$. Then $\la_++\gamma h>0$, so the flow
corresponding to $\la_+$ never contains stagnation points. The flow
corresponding to $\la_-$ contains stagnation points if and only if
$\la_-+\gamma h\geq 0$, which is equivalent to \be\label{stp}
\frac{\tanh(kh)}{kh}\leq \frac{\gamma^2h}{g+\gamma^2h}.\ee For fixed
$h$, the left-hand side of (\ref{stp}) is monotone as a function of
$k$, with limit $1$ as $k\to0$ and limit $0$ as $k\to\infty$, while
the right-hand side is monotone as a function of $\gamma$, with
limit $0$ as $\gamma\to 0$ and limit $1$ as $\gamma\to\infty$.
Hence, for fixed $h$ and $\gamma$, there exists a unique $k^*>0$ for
which equality holds in (\ref{stp}), which corresponds to a laminar
flow stagnant at the bottom, and for $k<k^*$ the flow does not have
stagnation points, while for $k>k^*$ the flow has a line of
stagnation points. On the other hand, if $h$ and $k$ are fixed, then
there exists a unique $\gamma^*>0$ for which equality holds in
(\ref{stp}), which corresponds to a laminar flow stagnant at the
bottom, and for $\gamma<\gamma^*$ the flow does not have stagnation
points, while for $\gamma>\gamma^*$ the flow has a line of
stagnation points. Whenever (\ref{stp}) holds, the stagnation line
is $Y=Y_0$, where
\begin{equation}\label{lb12}
h-Y_0=\frac{\tanh(kh)}{2k}+
\displaystyle\sqrt{\frac{\tanh^2(kh)}{4k^2}+\frac{g}{\gamma^2}\,\frac{\tanh(kh)}{k}}
\end{equation}
from which we see that the distance between the stagnation line and
the free surface decreases with $k$ and with $\gamma^2$, and
approaches $0$ if and only if $k\to \infty$ and $\gamma^2\to\infty$.

Suppose now that $\gamma<0$. This case is the mirror image of the
previous one. The flow corresponding to $\la_-$ never contains
stagnation points. The flow corresponding to $\la_+$ contains
stagnation points if and only if (\ref{stp}) holds, in which case
the stagnation line is $Y=Y_0$, where $Y_0$ satisfies (\ref{lb12}).

It is of interest to compare Theorem \ref{bifu} with the local
bifurcation results for waves of constant vorticity $\gamma$ that
were obtained in \cite{CS} and in \cite{W}. Let us fix $k>0$.
Varying $h>0$ in Theorem \ref{bifu} we obtain all periodic traveling
wave solutions of small amplitude and minimal period $2\pi/k$ that
lie in a neighborhood of a laminar flow: for each $h$ we obtain in
the space $(h,m, (\mu, w)) \in  (0,\infty)\times\bdr \times \mcx$
two smooth curves through the points $(h, m_-(h), 0)$ and $(h,
m_+(h),0)$ with $m_{\mp}$ given by (\ref{lb7}). The union of these
curves gives a surface ${\mathcal M} \subset (0,\infty)\times\bdr
\times \mcx$.

\bigskip
\begin{center}
\includegraphics[width=13cm]{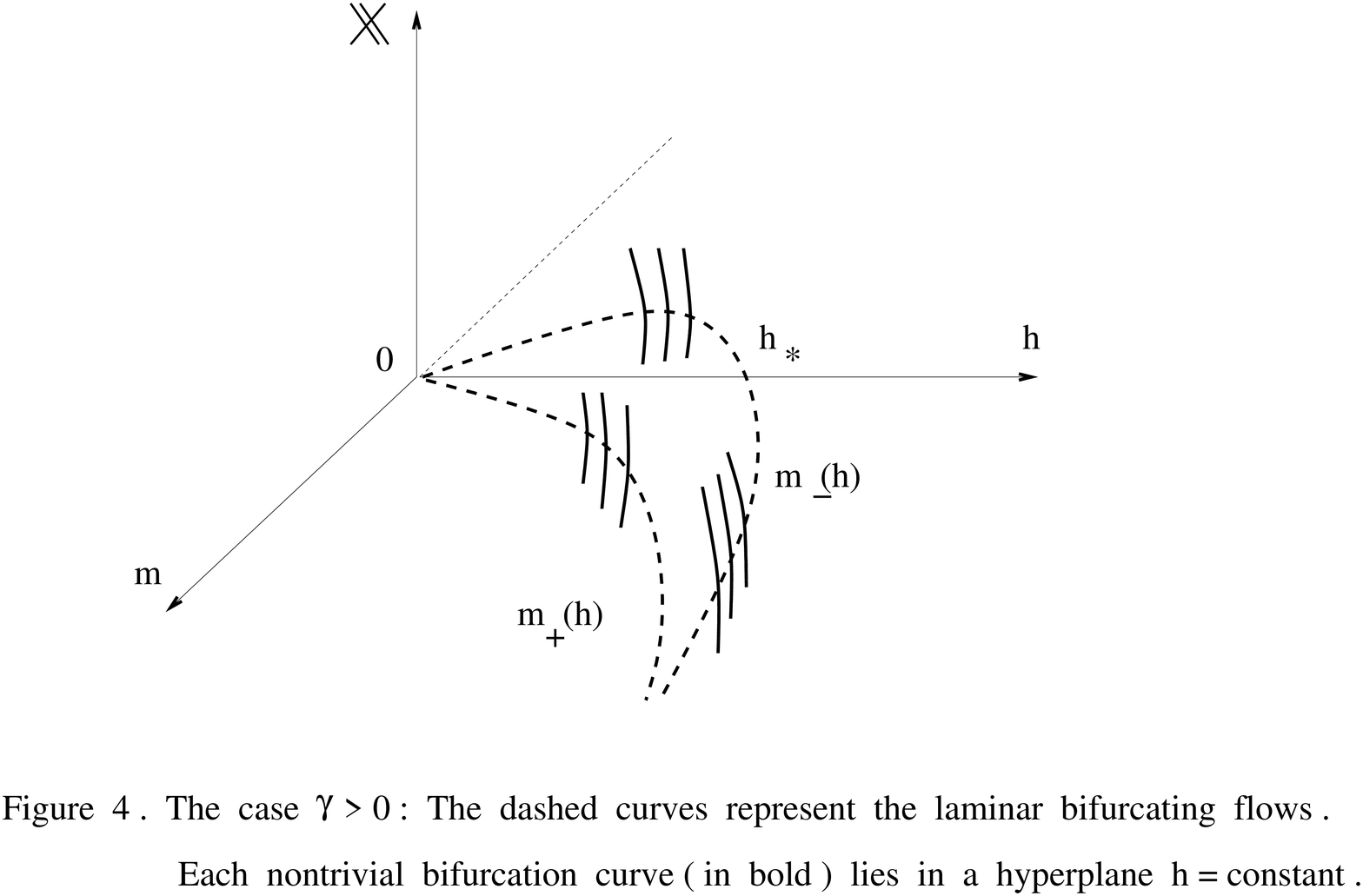}
\end{center}

\bigskip

\noindent In Figure 4 we depict the situation in the case
$\gamma>0$:

(i) In the $(h,m)$-plane the curve $h \mapsto m_+(h)$ starts at the
origin and approaches the parabola $m=\displaystyle\frac{\gamma
h^2}{2}$ as $h \to \infty$, while $m_+(h)>0$ for $h>0$. As pointed
out in the lines preceding (\ref{stp}), the nontrivial flows that
bifurcate from the laminar flows corresponding to $m_+(h)$ do not
present stagnation points.

(ii) In the $(h,m)$-plane the curve $h \mapsto m_-(h)$ also starts
at the origin and approaches the parabola
$m=\displaystyle\frac{\gamma h^2}{2}$ as $h \to \infty$. Denoting by
$h_*>0$ the unique positive solution of the equation
$$\frac{\tanh(kh)}{kh}=\frac{\gamma^2\,h}{4g+2\gamma^2\,h}$$
we see from (\ref{lb7}) that $m_-(h)<0$ for $h \in (0,h_*)$ and
$m_-(h)>0$ for $h>h_*$. Moreover, since
$$\frac{\gamma^2\,h_*}{4g+2\gamma^2\,h_*} < \frac{\gamma^2\,h_*}{g+\gamma^2\,h_*}$$
and the function $h \mapsto \displaystyle\frac{\tanh(kh)}{kh}$ is
strictly decreasing on $(0,\infty)$ while $h \mapsto
\displaystyle\frac{\gamma^2\,h}{g\,+\,\gamma^2\,h}$ is strictly
increasing, we deduce that the nontrivial flows bifurcating from the
laminar flows at $m_-(h)$ contain stagnation points if and only if
$h>h^\ast$, where $h^\ast \in (0,h_*)$ is the unique solution to the
equation obtained by requiring equality in (\ref{stp}).

\medskip

The approach in \cite{W} is very different from ours, in that the
author performs a flattening $(X,Y) \mapsto
\left(X,\displaystyle\frac{Y}{\eta(X)}\right)$ of the free surface
$Y=\eta(X)$. (This method is therefore restricted to waves with
non-overhanging profiles.) The bifurcation-inducing laminar flows
identified there are exactly the same as ours. However, from the
family of laminar flows of any depth $h_0$, the local bifurcation
curve constructed in \cite{W} issuing from $ (h_0, m_\pm(h_0))$ is a
curve on ${\mathcal M}$ for which \be Q=2gh_0 + \Big(
\frac{m}{h_0}-\frac{\gamma h_0}{2}\Big)^2 \label{za}\ee for the
corresponding solutions of (\ref{apb}), but no information on the
conformal mean depth of the fluid domains is obtained (so that it
need not be $h_0$). There is a remark in \cite{W} that a
modification of the method there could have led to the existence of
a new curve on ${\mathcal M}$, for which, instead of (\ref{za}), the
condition \be h_0=\frac{k}{2\pi}\int_{-\pi/k}^{\pi/k} \eta(X)\,dX\ee
will hold for the corresponding solutions of (\ref{apb}).

The approach in \cite{CS}, while more general in that it covers also
non-constant vorticities, requires that there are no stagnation
points in the flow. More precisely, it relies on a hodograph
transform, for the existence of which it is essential that the
horizontal velocity never vanishes in the flow. In \cite{CS} the
relative mass flux $m$ is fixed and $h$ varies, so each nontrivial
bifurcation curve on $\mathcal{M}$ is contained in a hyperplane
$m=\text{constant}$. Since \cite{CS} only studies solutions for
which $\psi_Y<0$ in $\overline\Om$,  it only obtains $\lambda_-$ in
(\ref{lb7}) as a bifurcation point, and only in those cases for
which the associated laminar flow (\ref{lb8}) does not have
stagnation points. (However, the same method is easily seen to be
applicable also for solutions for which $\psi_Y>0$ in
$\overline\Om$, leading to $\lambda_+$ in (\ref{lb7}) as a
bifurcation point, though again only in those cases for which the
associated laminar flow (\ref{lb8}) does not have stagnation
points.)
 In \cite[Section 3]{CS} it was concluded that, for $k=1$, local bifurcation
from trivial flows always occurs for negative constant vorticity,
while in the case of positive constant vorticity there is a
necessary and
 sufficient condition on the value of the mass flux $m<0$, namely
\begin{equation}\label{lb15}
\tanh\Big(\sqrt{\frac{-2m}{\gamma}}\Big) >
\frac{-2\,m\gamma}{g+\gamma\sqrt{-2m\gamma}}.
\end{equation}
Let us check these results against ours. We are thus looking at the
laminar bifurcation-inducing flows for $k=1$, and with $\psi_Y<0$
everywhere in the closure of the fluid domain. Hence, by
(\ref{lb6}), (\ref{lb7}) and (\ref{wta}), and dropping the
subscripts,
\begin{equation}
\la=-\,\frac{\gamma\tanh(h)}{2} - \displaystyle\sqrt{\frac{
\gamma^2\tanh^2(h)}{4}+g\,\tanh(h)},\label{ab6}
\end{equation}
\begin{equation}\label{ab7}
m=\frac{\gamma h^2}{2}-\,\frac{\gamma h\tanh(h)}{2}-
h\,\displaystyle\sqrt{\frac{\gamma^2\tanh^2(h)}{4}+g\,{\tanh(h)}},
\end{equation}
\be \la+\gamma h<0.\label{an8}\ee We are interested in the range of
the mapping $h\mapsto m(h)$ given by (\ref{ab7}), restricted to the
set of $h\in (0,\infty)$ for which (\ref{an8}) holds. We will prove
that the range is $(-\infty, 0)$ if $\gamma<0$ and coincides with
the set for which (\ref{lb15}) holds, thus confirming the
conclusions of \cite{CS}. The case $\gamma<0$ is easy, so we
concentrate on the case $\gamma>0$.
 Recall that (\ref{ab6}) was
obtained from (\ref{bif}) with $n=1$ and $k=1$, hence
\be\la^2=(g-\gamma\lambda)\tanh(h),\label{an9}\ee as the solution
with \be \la<0,\label{an10}\ee while (\ref{ab7}) was obtained from
(\ref{ab6}) by, see (\ref{lb4}), \be m=\lambda h+\frac{\gamma
h^2}{2}.\label{an11}\ee Hence $m$ is in the required range if and
only if (\ref{an9}) and (\ref{an11}) are solvable with respect to
$\la\in\bdr$ and $h>0$ so that (\ref{an8}) and (\ref{an10}) are
satisfied. We solve first (\ref{an11}) with respect to $h$ for fixed
$\la$ satisfying (\ref{an10}). The condition \be \la\leq
-\sqrt{-2\gamma m}\label{an12}\ee is necessary and sufficient for
solvability, and the requirement (\ref{an8}) leads to the unique
solution
 \be
h=-\frac{\lambda}{\gamma}-\frac{\sqrt{\la^2+2\gamma
m}}{\gamma},\label{an15}\ee and to strict inequality in
(\ref{an12}). Plugging (\ref{an15}) into (\ref{an9}) we obtain
 \be
\tanh\left(-\frac{\lambda}{\gamma}-\frac{\sqrt{\la^2+2\gamma
m}}{\gamma}\right)= \frac{\la^2}{g-\gamma\la},\ee to be solved with
respect to $\lambda$ in the interval $(-\infty, -\sqrt{-2\gamma
m})$. The necessary and sufficient condition for this solvability is
easily seen to be exactly (\ref{lb15}), as required.

\begin{appendix}

\section{On conformal mappings for $L$-periodic strip-like domains}

We prove here the existence and uniqueness of the conformal mean
depth for any $L$-periodic strip-like domain $\Om$. For any such
domain $\Om$, let $\Om^R$ be the domain obtained from $\Om$ by
symmetrization with respect to the real axis $\mcb$, and let
$$\tilde\Omega=\Om\cup\mcb\cup\Om^R.$$
Let $H>0$ be such that $\tilde\Om$ is a subset of the horizontal
strip
$$S_H=\{ (x,y) \in \bdr^2:\ -H < y <H\}.$$
The application
$$\Phi(z)=\displaystyle\frac{e^{\frac{\pi}{2H}\,z}-1}{e^{\frac{\pi}{2H}\,z}+1}$$
maps $S_H$ conformally onto the unit disc $\bdd=\{\xi \in \bdc:\
|\xi|<1\}$. Let $\Pi=\Phi(\tilde\Om)$. The boundary of $\Pi$ is a
Jordan curve ${\mathcal J} \subset \bdd \cup \{\pm1\}$, symmetric
about the real axis and passing through the points $\pm1$. Clearly
$\Phi$ is a homeomorphism between the closures of $\tilde\Om$ and of
$\Pi$. Moreover, if $\bdd_+$ denotes the part of $\bdd$ situated in
the upper half-plane and $\Pi_+=\Pi\cap\bdd_+$, then $\Pi_+=
\Phi(\Om)$. Carath\'eodory's theorem \cite[Theorem 2.6 and Corollary
2.6, p.\ 24]{Po} ensures the existence and uniqueness of a conformal
map $\Phi_1$ from $\bdd$ onto $\Pi$, which has an extension as a
homeomorphism between the closures of these domains and is such that
$\Phi_1(\pm 1)=\pm 1$ and $\Phi_1(0)=0$. The uniqueness assertion
implies that $\Phi_1(\bar{z})=\overline{\Phi_1(z)}$ for all $z \in
\bdd$. This in turn implies that $\Phi_1$ maps the segment $[-1,1]
\subset \bdr$ onto itself, and the imaginary part of $\Phi_1$ does
not change sign in $\bdd_+$. Since the real part of $\Phi_1$ is
strictly increasing on the segment $[-1,1] \subset \bdr$, the
Cauchy-Riemann equations on that segment imply that the imaginary
part of $\Phi_1$ is positive in $\bdd_+$. Hence $\Phi_1$ maps
$\bdd_+$ onto $\Pi_+$. Denote $\Phi_0=\Phi^{-1}$. Since
\[\Phi_2(z)=\displaystyle\frac{e^{\frac{\pi}{2}\,(z+i)}-1}{e^{\frac{\pi}{2}\,(z+i)}+1}\]
maps $\mcr_2$ conformally onto $\bdd$ , we see that $\Phi_0 \circ
\Phi_1 \circ \Phi_2=U_0+iV_0$ with $U_0,\,V_0:\mcr_2 \to \bdr$, is a
conformal mapping of $\mcr_2$ onto $\tilde\Omega$, with an extension
as a homeomorphism between the closures of these domains, such that
 $\mcr_1$ is mapped onto $\Om$ and points symmetric in ${\mcr_2}$ with respect
to the line $y=-1$ are mapped into points in ${\tilde\Omega}$ that
are symmetric with respect to the real axis.

Let $\beta+i\delta$ be the inverse of $U_0+iV_0$, with
$\beta,\,\delta: \tilde\Om \to \bdr$. Since $\Om$ is an $L$-periodic
strip-like domain, we observe that the function
$$(X,Y) \mapsto \delta(X+L,Y)-\delta(X,Y)$$
is harmonic in $\Om$, and has a continuous extension to the closure
of $\Om$ which is zero on the boundary of $\Om$ since by
construction $\delta=-1$ on $\mcb$ and $\delta=0$ on $\mcs$. Being
also bounded in ${\Om}$ (since by construction $|\delta| \le 1$),
the Phragmen-Lindel\"of principle \cite{DM} ensures that this
function is identically zero, so that
\begin{equation}\label{de}
\delta(X+L,Y)=\delta(X,Y),\qquad (X,Y) \in \Om.
\end{equation}
The Cauchy-Riemann equations then imply that
\begin{equation}\label{be}
\beta(X+L,Y)-\beta(X,Y)=K,\qquad (X,Y) \in \Om,
\end{equation}
for some constant $K \in \bdr$. Note that necessarily $K>0$, since
the fact that the real part of $\Phi_1$ is a strictly increasing
homeomorphism of the segment $[-1,1]\subset\bdd$ implies that
$x\mapsto\beta(x,0)$ is a strictly increasing homeomorphism of
$\bdr$. From (\ref{de}) and (\ref{be}) we infer that
\begin{equation}\label{uv0}
\begin{cases}
U_0(x+K,y)=U_0(x,y)+L,\\
V_0(x+K,y)=V_0(x,y),
\end{cases}\qquad (x,y) \in {\mcr_1}.
\end{equation}
Replacing $ U_0+iV_0$ from $\mcr_1$ onto $\Om$ by $\tilde U+i\tilde
V: \mcr_h \to \Om$ given by \[\tilde U(x,y)= U_0(x/h,
y/h),\qquad\tilde V(x,y)= V_0(x/h, y/h), \qquad (x,y)\in\mcr_h,\]
where \[h={L}/{K},\] we see that $h$ satisfies the definition of the
conformal mean depth of $\Om$.

\bigskip\bigskip

\begin{center}
\includegraphics[width=12cm]{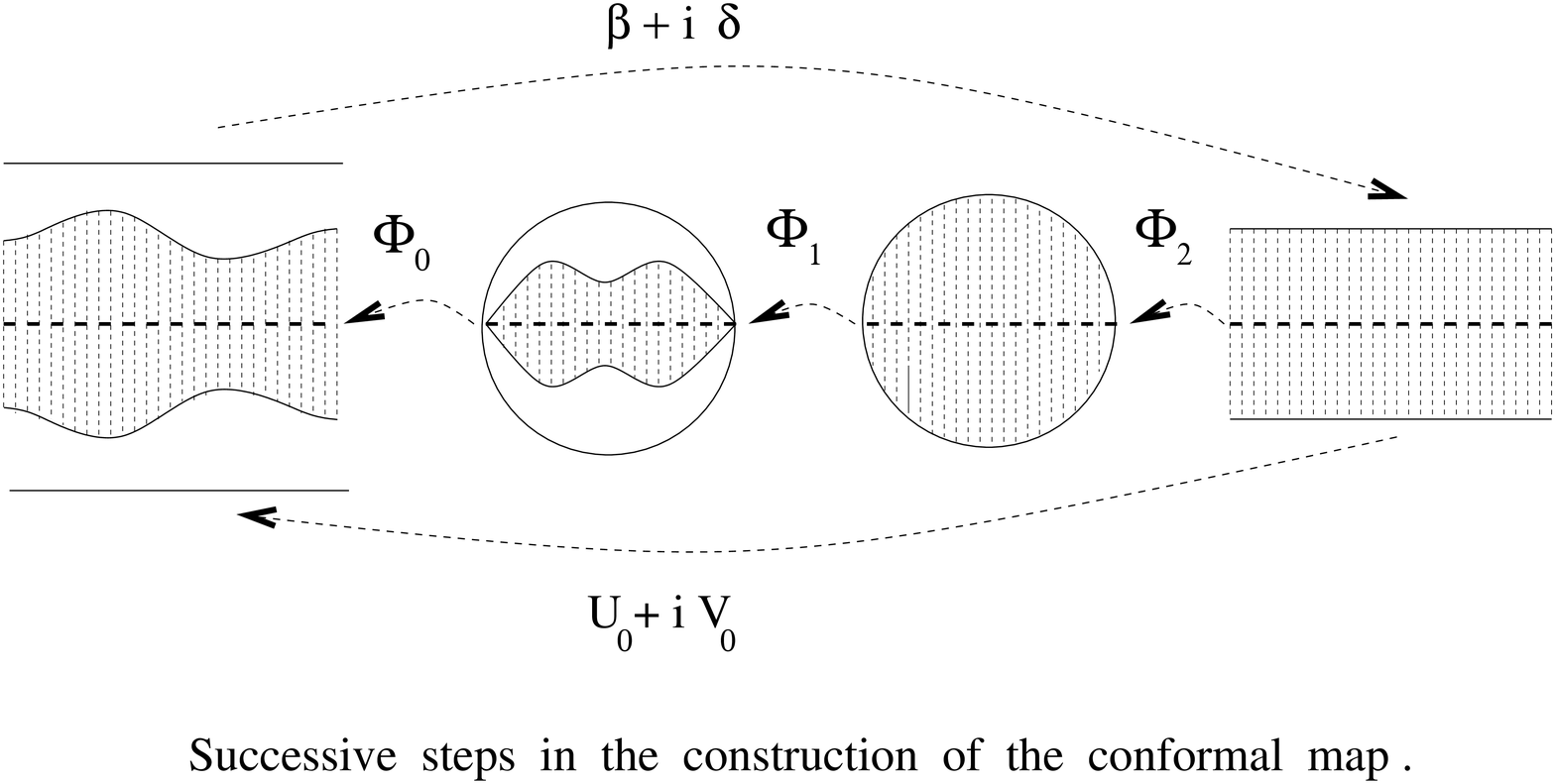}
\end{center}

\bigskip\bigskip

Suppose now that $\hat h $ also satisfies this definition, and let
$\hat U +i\hat V: \mcr_{\hat h}\to \Om $ be the corresponding
conformal mapping. Then
$$(\hat U+i\hat V)^{-1}\circ (\tilde U+i\tilde V)$$
is a conformal bijection from $\mcr_h$ onto $\mcr_{\hat h}$ and a
homeomorphism between the closures of these domains. Let us write
$$(\hat U+i\hat V)^{-1}\circ (\tilde U+i\tilde
V)=\tau+i\om,$$ where $\tau,\om:\mcr_h\to\bdr$ are harmonic
functions. It follows that \be\begin{cases}
\tau(x+L,y)=\tau(x,y)+L,\\
\omega(x+L,y)=\omega(x,y),
\end{cases}\qquad (x,y) \in {\mcr_h}.\label{to}\ee
Also,
\[\om (x,0)=0, \quad \om(x, -h)=-\hat h,\qquad x\in\bdr.\]
The maximum principle shows that
\[ \om (x,y)=\hat h y/h, \quad (x,y)\in\mcr_h,\]
which implies that
\[\tau(x,y)=(\hat h x/h)+C, \quad (x,y)\in\mcr_h,\]
where $C\in\bdr$ is a constant, a fact which, together with the
first relation in (\ref{to}) implies that $\hat h=h$.

If, in addition, $\mcs$ is a curve of class $C^{1,\alpha}$, then the
Jordan curve ${\mathcal J}$ is $C^{1,\alpha}_{\textnormal{loc}}$
everywhere with exception of the two points $\pm1$. By the
Kellogg-Warschawski theorem \cite[Theorems 3.5 and 3.6, p.\ 48]{Po},
the conformal map $\Phi_1$ inherits this smoothness up to the
boundary, namely it is of class $C^{1,\alpha}_{\textnormal{loc}}$ in
the closure of $\bdd$ with the exception of $\pm 1$ and, moreover,
its derivative is not zero at any point in $\overline\bdd\setminus
\{\pm 1$\}. Our construction ensures, taking into account the
periodicity, that $\tilde U,\,\tilde V \in
C^{1,\alpha}(\overline{\mcr_h})$ and
\[\tilde U_x^2+ \tilde V_x^2\neq 0\qquad \text{in }\overline{\mcr_h}.\]

We now show that $h$ coincides with the `mean depth' of a
$L$-periodic strip-like domain as defined in \cite{AT}. There, the
domain $\Om$ is considered by as being occupied by a fluid in steady
irrotational motion, so that $\mcs$ and $\mcb$ are streamlines for a
stream function $\psi$ which is $L$-periodic in $X$ throughout $\Om$
and satisfies (\ref{ap0}) with $\gamma=0$, (\ref{ap1}) and
(\ref{ap2}). Since the flow is irrotational, there exists an
associated velocity potential $\phi$ in $\Om$, which is a harmonic
function such that $\phi+i\psi$ is holomorphic. The velocity field
is then given by $(\psi_Y, -\psi_X)$, which also equals $(\phi_X,
\phi_Y)$ by the Cauchy-Riemann equations. The mean horizontal
velocity of the flow is then given by
\[c=\frac{1}{L}\int_X^{X+L} \phi_X(T, Y)\,dT= \frac{1}{L}(\phi(X+L,Y)-\phi(X,Y)),\]
a quantity which is easily seen not to depend on $(X,Y)$ as long as
the horizontal line segment joining $(X,Y)$ and $(X+L,Y)$ is
contained in $\Om$. The `mean depth' is then defined as $m/c$, the
ratio between the flux and the mean horizontal velocity. With
$\beta,\delta$ as previously defined in Appendix A, note that the
maximum principle implies that
\[\psi(X,Y)=m\delta(X,Y), \quad (X,Y)\in\Om, \]
and then the Cauchy-Riemann equations show that \[
\phi(X+L,Y)-\phi(X,Y)=m(\beta(X+L,Y)-\beta(X,Y)), \quad (X,Y)\in
\Om.\] Therefore
\[cL=mK\]
and hence
\[h=m/c.\]
Consequently the conformal mean depth of $\Om$ coincides with the
`mean depth' defined in \cite{AT}.

\section{A commutator estimate}

\begin{lemma}\label{lm}
  If $f \in C^{1,\alpha}_{2\pi}$ and $g\in C^{0,\alpha}_{2\pi}$ with  $\alpha \in
(0,1)$, then \[f\,\mcc(g)-\mcc(fg) \in
C^{1,\delta}_{2\pi}\quad\text{ for all }\delta \in (0,\alpha).\]
\end{lemma}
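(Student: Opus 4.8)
The plan is to exploit the explicit singular-integral representation~(\ref{ht}) of $\mcc$ and to write the commutator $f\,\mcc(g)-\mcc(fg)$ as an integral operator whose kernel gains one degree of smoothness from the H\"older continuity of $f$. Concretely, for $t$ in a period, one has formally
\[
\big(f\,\mcc(g)-\mcc(fg)\big)(t)=\frac{1}{2\pi}\,PV\!\int_{-\pi}^{\pi}\cot\!\Big(\frac{t-s}{2}\Big)\big(f(t)-f(s)\big)\,g(s)\,ds .
\]
Since $f\in C^{1,\alpha}_{2\pi}$, the factor $f(t)-f(s)$ vanishes to first order at $s=t$ and in fact $f(t)-f(s)=f'(t)(t-s)+O\big(|t-s|^{1+\alpha}\big)$; combined with the $O\big(|t-s|^{-1}\big)$ behaviour of $\cot\big(\tfrac{t-s}{2}\big)$, the kernel $k(t,s)=\cot\big(\tfrac{t-s}{2}\big)\big(f(t)-f(s)\big)$ is bounded, so the principal value is in fact an absolutely convergent integral and the operator is not singular. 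The first task is therefore to justify this rewriting rigorously (the $PV$ of $\cot\big(\tfrac{t-s}{2}\big)\cdot 1$ over a period is zero, which is what allows the insertion of $f(t)$ inside the integral), and to record that $f\,\mcc(g)-\mcc(fg)$ is continuous.

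Next I would establish the $C^{0,\delta}$-bound, and then upgrade to $C^{1,\delta}$. The natural route for the derivative is to differentiate under the integral sign. Writing $H(t):=\big(f\,\mcc(g)-\mcc(fg)\big)(t)$, one splits $k(t,s)=\cot\big(\tfrac{t-s}{2}\big)f'(t)(t-s)+\cot\big(\tfrac{t-s}{2}\big)\big(f(t)-f(s)-f'(t)(t-s)\big)$. The first piece equals $f'(t)$ times a fixed smooth periodic function (since $(t-s)\cot\big(\tfrac{t-s}{2}\big)$ is smooth), hence contributes a term of class $C^{0,\alpha}_{2\pi}$, in fact $C^{1,\alpha}_{2\pi}$ after using $f'\in C^{0,\alpha}$; this is actually the ``good'' term. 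For the second piece, I would differentiate in $t$: the $t$-derivative of $f(t)-f(s)-f'(t)(t-s)$ is $f'(t)-f'(t)-f''(t)(t-s)$ — but $f$ is only $C^{1,\alpha}$, so there is no $f''$, and one must instead integrate by parts in $s$, transferring the $s$-derivative onto $g$. Here is where the commutator structure really matters: after an integration by parts in $s$ one re-expresses $\partial_t H$ again as a Hilbert-transform-type operator applied to combinations of $f$, $f'$, $g$, all of class at least $C^{0,\alpha}_{2\pi}$, plus genuinely smooth convolution terms, so Privalov's theorem (the $C^{0,\alpha}_{2\pi}\to C^{0,\alpha}_{2\pi}$ boundedness of $\mcc$) yields $\partial_t H\in C^{0,\delta}_{2\pi}$ for every $\delta<\alpha$. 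Assembling the pieces gives $H\in C^{1,\delta}_{2\pi}$.

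The main obstacle is the loss at the top order just described: naively differentiating the kernel twice would require $f''$, which is unavailable, so the proof hinges on choosing the right splitting and the right integration-by-parts so that every surviving singular term is a Hilbert transform of a merely $C^{0,\alpha}_{2\pi}$ function (to which Privalov applies with the unavoidable loss $\delta<\alpha$), while the non-smooth-kernel remainder is handled by direct H\"older estimates on the kernel $k(t,s)$ and its $t$-difference $k(t_1,s)-k(t_2,s)$. The endpoint $\delta=\alpha$ genuinely fails, as the Remark following Lemma~3.2 already shows, so the loss is not an artifact of the method. Lemma~3.1 (boundedness of $\mcc$ on $C^{0,\alpha}_{2\pi}$, i.e.\ Privalov) is the only external input needed; everything else is elementary kernel analysis and integration by parts on the circle.
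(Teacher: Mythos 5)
Your starting point coincides with the paper's: the commutator is written as $\Theta(t)=\frac{1}{2\pi}\int_{-\pi}^\pi \cot\bigl(\tfrac{t-s}{2}\bigr)\{f(t)-f(s)\}\,g(s)\,ds$, an absolutely convergent (non-PV) integral because $f\in C^{1,\alpha}_{2\pi}$. But the core of your plan for the derivative has a genuine gap. First, your splitting $k(t,s)=\cot\bigl(\tfrac{t-s}{2}\bigr)f'(t)(t-s)+\cot\bigl(\tfrac{t-s}{2}\bigr)\bigl(f(t)-f(s)-f'(t)(t-s)\bigr)$ does not decompose $\Theta$ into two pieces that are separately $C^{1,\delta}$: the first piece is $f'(t)$ times a function of $t$, and since $f'$ is only $C^{0,\alpha}$ this product is in general only $C^{0,\alpha}$, not $C^{1,\alpha}$ as you claim (that would require $f''$); the non-differentiable parts of the two pieces must cancel, which is precisely the cancellation your splitting destroys. (There is also a technical defect: $(t-s)\cot\bigl(\tfrac{t-s}{2}\bigr)$ is not periodic in $s$ and still blows up at $t-s=\pm2\pi$ within the fixed window of integration; one would have to use $\sin(t-s)$ or $2\sin\bigl(\tfrac{t-s}{2}\bigr)$ instead.) Second, your proposed rescue of the remainder term — ``integrate by parts in $s$, transferring the $s$-derivative onto $g$'' — is not available, because $g\in C^{0,\alpha}_{2\pi}$ has no derivative. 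Third, the claimed mechanism for the H\"older bound is internally inconsistent: Privalov's theorem is bounded $C^{0,\alpha}_{2\pi}\to C^{0,\alpha}_{2\pi}$ with no loss, so if $\Theta'$ really were a sum of $\mcc$ applied to $C^{0,\alpha}$ functions plus smooth convolutions, you would conclude $\Theta\in C^{1,\alpha}_{2\pi}$, contradicting the sharpness example you yourself invoke. The loss $\delta<\alpha$ cannot come from Privalov; it has to come from somewhere else, and your sketch does not identify where.

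What the paper does instead is to differentiate the \emph{unsplit} kernel: $\partial_t\bigl[\cot\bigl(\tfrac{t-s}{2}\bigr)\{f(t)-f(s)\}\bigr]=\frac{f'(t)\sin(t-s)-\{f(t)-f(s)\}}{2\sin^2\bigl(\frac{t-s}{2}\bigr)}$, whose numerator is $O(|t-s|^{1+\alpha})$ by the Mean Value Theorem, so no $f''$ is ever needed and the formula for $\Theta'$ is an absolutely convergent integral with a weakly singular kernel of size $|t-s|^{\alpha-1}$. The justification of differentiation under the integral sign is done by a near/far splitting and dominated convergence, and the $C^{0,\delta}$ estimate for $\Theta'$ is then obtained by direct kernel estimates: the near-diagonal contributions are $O(|t|^\alpha)$, while the far-field differences produce the bound $C|t|^\alpha|\log|t||\le C|t|^{\delta}$ — this logarithm is the true source of the restriction $\delta<\alpha$. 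Privalov's theorem is not used at all in that argument. To repair your proposal you would essentially have to abandon the splitting and the integration by parts and carry out exactly this direct computation.
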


\begin{proof}
Let \[\Theta(t)=f(t)\,\mcc(g)(t)-\mcc(fg)(t)\quad\text{for all
}t\in\bdr.\] Using the definition of $\mcc$ we obtain that, for all
$t\in\bdr$, \be \Theta(t)= \frac{1}{2\pi}\int_{-\pi}^\pi
\cot\left(\frac{t-s}{2}\right)\,\{f(t)-f(s)\}\, g(s)\,ds,\ee where,
since $f \in C^{1,\alpha}_{2\pi}$, the integral is not a principal
value integral, but a genuine Lebesgue integral. In proving that
$\Theta\in C^{1,\delta}_{2\pi}$ for all $\delta \in (0,\alpha)$ we
will make use of the following inequalities: \be |x-\sin(x)|\leq
\frac{1}{6}|x|^3\quad\text{for all $x\in\bdr$,}\label{yr}\ee \be
\frac{1}{\sin^2(x)}\leq \sigma (|x|)\frac{1}{x^2}\quad\text{for all
$x$ with } 0<|x|<\pi,\label{yt} \ee \be
\left|\cot(x)-\frac{1}{x}\right|\leq \varsigma(|x|)|x| \quad
\text{for all $x$ with }0<|x|<\pi,\label{ys}\ee  where
$\sigma,\varsigma:(0,\pi)\to[0,\infty)$ are bounded functions on
$(0,a]$ for each $a\in(0,\pi)$.

We show first that $\Theta$ is differentiable on $\bdr$ and its
derivative is given, for all $t\in\bdr$, by the formula obtained by
formal differentiation under the integral sign: \be
\Theta'(t)=\frac{1}{4\pi}\int_{-\pi}^{\pi}\frac{ f'(t) \sin
(t-s)-\{f(t)-f(s)\}}{\sin^2\left(\frac{t-s}{2}\right)}g(s)\,ds.\label{deriv}\ee
Note that the above integral finite. Indeed, the integral
\be\frac{1}{4\pi}\int_{t-\pi}^{t+\pi}\frac{ f'(t) \sin
(t-s)-\{f(t)-f(s)\}}{\sin^2\left(\frac{t-s}{2}\right)}g(s)\,ds,\label{pel}\ee
is finite, as it can be seen upon using (\ref{yr}), (\ref{yt}) and
the Mean Value Theorem, by which \begin{align}
|f'(t)(t-s)-\{f(t)-f(s)\}|&=|t-s|\,|f'(t)-f'(\xi)|\non\\ &\leq
C|t-s|^{1+\alpha},\non\end{align}
 for some $\xi=\xi(t,s)$ with
$|\xi-t|\leq |s-t|$. Hence the integral in (\ref{deriv}) is finite,
being obtained from that in (\ref{pel}) by a rearrangement using the
periodicity of the functions involved. In the above and throughout
what follows, we denote by $C$ a constant which depends only on the
norm of $f$ in $C^{1,\alpha}_{2\pi}$ and the norm of $g$ in
$C^{0,\alpha}_{2\pi}$, and is allowed to vary from line to line.

Since the operator defining $\Theta$ commutes with translations, it
suffices to show that $\Theta$ is differentiable at $t=0$, with
derivative at $t=0$ given by (\ref{deriv}). Note that, for any $t$
close to $0$, one can write
$$2\pi\,\displaystyle\frac{\Theta(t)-\Theta(0)}{t}= I_1(t)+I_2(t),$$
where
$$\begin{array}{c}
I_1(t)=\displaystyle\frac{1}{t}\int_{-2|t|}^{2|t|}
\left\{\cot\left(\frac{t-s}{2}\right)\{f(t)-f(s)\}-\displaystyle\cot\left(\frac{-s}{2}\right)\{f(0)-f(s)\}\right\}\,
g(s)\,ds,\\[0.4cm]
I_2(t)=\,\displaystyle\int_{(-\pi,-2|t|)\cup(2|t|,\pi)}
\frac{1}{t}\left\{\cot\left(\frac{t-s}{2}\right)\{f(t)-f(s)\}\,
 -\,\displaystyle\cot\left(\frac{-s}{2}\right)\{f(0)-f(s)\}\right\}g(s)\,ds.\\[0.4cm]
\end{array}$$
Using (\ref{ys}), one can write $I_1(t)$ as the sum of a term which
tends to $0$ as $t\to 0$ and
\[\frac{2}{t}\int_{-2|t|}^{2|t|}
\left\{\frac{f(t)-f(s)}{t-s}- \frac{f(0)-f(s)}{0-s}\right\}
g(s)\,ds\] Since, for fixed $t$, the integrand in the above is
continuous as a function of $s$, the Mean Value Theorem for
integrals shows that the above quantity equals
\[\frac{8\,|t|}{t}\, \left\{\frac{f(t)-f(\xi)}{t-\xi}- \frac{f(0)-f(\xi)}{0-\xi}\right\}
g(\xi)\] for some $\xi\in(-2|t|, 2|t|)$. Using again the Mean Value
Theorem, the above quantity equals
\[\frac{8\,|t|}{t}\, \left\{f'(\xi_1)- f'(\xi_2)\right\}
g(\xi)\] for some $\xi_1,\xi_2\in(-2|t|, 2|t|)$. Since $f'$ is
continuous and $g$ is bounded, the above quantity tends to $0$ as
$t\to 0$. We have thus proved that \be I_1(t)\to 0\quad\text{as
}t\to 0.\label{vas}\ee We now prove that, as $t\to 0$, \be I_2(t)\to
\int_{-\pi}^{\pi}\frac{ f'(0) \sin
(-s)-\{f(0)-f(s)\}}{2\sin^2\left(\frac{s}{2}\right)}\,g(s)\,ds.\label{mpn}\ee
For this, we use the Dominated Convergence Theorem.  Observe that
 $I_2(t)$ can be written as an integral over $(-\pi,\pi)$, with the
 integrand given by
$$\chi_{(-\pi,-2|t|)\cup(2|t|,\pi)}\,\Big(\frac{f(t)-f(0)}{t}
\,\cot(\tfrac{t-s}{2})\,+\,
\{f(0)-f(s)\}\,\frac{\cot(\frac{t-s}{2})\,-\,\cot(-\,\frac{s}{2})}{t}
\Big)\,g(s)$$ where $\chi_A$ stands for the characteristic function
of the set $A$. Firstly, note that, as $t\to 0$, the above integrand
converges pointwise to the integrand on the right-hand side of
(\ref{mpn}). Secondly, note that, for each $s\in (-\pi,0)\cup
(0,\pi)$ and for each $t$ with $0<|t|< |s|/2$, the Mean Value
Theorem yields that \be
\begin{array}{c}
\displaystyle\frac{1}{t}\,\Big\{\cot\left(\frac{t-s}{2}\right)\,\{f(t)-f(s)\}\,-\,
\cot\left(\frac{-s}{2}\right)\,\{f(0)-f(s)\}\Big\}\\[0.4cm]
=\displaystyle\frac{ f'(\xi)
\sin(\xi-s)\,-\,\{f(\xi)-f(s)\}}{2\sin^2\left(\frac{\xi-s}{2}\right)}
\end{array}
\ee for some $\xi$ between $0$ and $t$. Using again (\ref{yr}) and
(\ref{yt}), we obtain \begin{align} \left|\frac{ f'(\xi) \sin
(\xi-s)-\{f(\xi)-f(s)\}}{2\sin^2\left(\frac{\xi-s}{2}\right)}\right|
&\leq C\left|\frac{ f'(\xi)
(\xi-s)-\{f(\xi)-f(s)\}}{(\xi-s)^2}\right|+C|\xi-s|\non\\
&=C\left|\frac{f'(\xi)-f'(\xi_0)}{\xi-s}\right|+C|\xi-s|\non\\
&\leq C|\xi-s|^{\alpha-1}\non\\
&\leq C|s|^{\alpha-1},\non
\end{align}
where $\xi$ is between $0$ and $t$, $\xi_0$ is between $\xi$ and
$s$, and we took into account that $0<|t|< |s|/2$ and $f\in
C^{1,\alpha}_{2\pi}$. Since $s\mapsto |s|^{\alpha -1}$ is integrable
and $g$ is bounded, we have thus checked the conditions of the
Dominated Convergence Theorem. Therefore, (\ref{mpn}) holds and,
since (\ref{vas}) holds, it follows  that $\Theta$ is differentiable
at $t=0$, with derivative given by (\ref{deriv}) evaluated at $t=0$.
As noted earlier, this implies that $\Theta$ is differentiable on
$\bdr$, with derivative given by (\ref{deriv}).

We now prove that $\Theta'\in C^{0,\delta}_{2\pi}$ for each
$\delta\in (0,\alpha)$. Since $\Theta'$ commutes with translation,
it suffices to show that \be |\Theta'(t)-\Theta'(0)|\leq
C|t|^\delta\quad\text{for all $t$ close to $0$}.\label{hold} \ee The
change of variables $s\mapsto (t-s)$ in (\ref{deriv}) leads to the
following formula \be
\Theta'(t)=\frac{1}{4\pi}\int_{-\pi}^{\pi}\frac{ f'(t-s) \sin
(s)\,-\,\{f(t)-f(t-s)\}}{\sin^2\left(\frac{s}{2}\right)}\,g(t-s)\,ds,\label{dva}\ee
for all $t\in\bdr$. It follows that, for each $t$ close to $0$, one
can write \be
4\pi\,\Big(\Theta'(t)-\Theta'(0)\Big)=J_1(t)+J_2(t)+J_3(t),\label{dete}\ee
where
$$J_1(t)=\int_{-2|t|}^{2|t|}\frac{ f'(t)\sin
(s)-\{f(t)-f(t-s)\}}{\sin^2\left(\frac{s}{2}\right)}\,g(t-s)\,ds$$
$$J_2(t)=-\,\int_{-2|t|}^{2|t|}\,\frac{
f'(0) \sin
(s)-\{f(0)-f(-s)\}}{\sin^2\left(\frac{s}{2}\right)}\,g(-s)\,ds$$
$$\begin{array}{ccc}
J_3(t)
&=&\displaystyle\int_{(-\pi,-2|t|)\cup(2|t|,\pi)}\,\Big\{\frac{
f'(t) \sin
(s)-\{f(t)-f(t-s)\}}{\sin^2\left(\frac{s}{2}\right)}\,g(t-s)\\[0.5cm]
&& -\,\displaystyle\frac{ f'(0)
\sin(s)-\{f(0)-f(-s)\}}{\sin^2\left(\frac{s}{2}\right)}\,g(-s)\Big\}\,ds.
\end{array}$$
 Note first that
\begin{align}\label{opw}
|J_1(t)|&\leq\int_{-2|t|}^{2|t|} |f'(t)||g(t-s)|\frac{ |\sin
(s)-s|}{\sin^2\left(\frac{s}{2}\right)}+\int_{-2|t|}^{2|t|}
|g(t-s)|\frac{
\left|\int_{-s}^{0}\{f'(t)-f'(t+\tau)\}\,d\tau\right|}
{\sin^2\left(\frac{s}{2}\right)}\,ds\\&\leq C|t|^2+C|t|^\alpha\non\\
&\leq C|t|^\alpha,\non
\end{align}
where we have used (\ref{yt}) and (\ref{yr}), the boundedness of
$f'$ and $g$, and the fact that $f'\in C^{0,\alpha}_{2\pi}$. Then,
exactly the same argument gives that \be |J_2(t)|\leq
C|t|^\alpha.\ee Also, one can write \be J_3(t)=K_1(t)+K_2(t),\ee
where
\begin{align}
K_1(t)&=\int_{(-\pi,-2|t|)\cup(2|t|,\pi)}\frac{ \{f'(t)-f'(0)\} \sin
(s)+\{f(0)-f(-s)\}-\{f(t)-f(t-s)\}}{\sin^2\left(\frac{s}{2}\right)}\,g(t-s)\,ds\non\\
K_2(t)&=\int_{(-\pi,-2|t|)\cup(2|t|,\pi)}\frac{ f'(0) \sin
(s)-\{f(0)-f(-s)\}}{\sin^2\left(\frac{s}{2}\right)}\,\big\{g(t-s)-g(-s)\big\}\,ds.\non
\end{align}
Using (\ref{yt}), the boundedness of $g$ and the fact that $f'\in
C^{0,\alpha}_{2\pi}$, it follows that
\begin{align}\label{an1}|K_1(t)|&\leq
C\int_{(-\pi,-2|t|)\cup(2|t|,\pi)}|g(t-s)|\Big\{\frac{|f'(t)-f'(0)|}{|s|}+
\frac{\left|\int_{-s}^{0}(f'(\tau)-f'(t+\tau))\,d\tau\right|}{|s|^2}\Big\}\,ds\\
&\leq C|t|^\alpha\int_{(-\pi,-2|t|)\cup(2|t|,\pi)}\frac{1}{|s|}\,ds\non\\
&\leq\non C|t|^\alpha|\log|t||\non\\&\leq
C|t|^\delta.\non\end{align} Also, using (\ref{yt}), the boundedness
of $f'$ and the the fact that $g\in C^{0,\alpha}_{2\pi}$, it follows
that
\begin{align}\label{an2}|K_2(t)|&\leq
C\int_{(-\pi,-2|t|)\cup(2|t|,\pi)}|g(t-s)-g(-s)|\Big\{\frac{
|f'(0)|}{|s|}
+\frac{\left|\int_{-s}^{0}f'(\tau)\,d\tau\right|}{|s|^2}\Big\}\,ds\\
&\leq C|t|^\alpha\int_{(-\pi,-2|t|)\cup(2|t|,\pi)}\frac{1}{|s|}\,ds\non\\
&\leq C|t|^\alpha|\log|t||\non\\
&\leq C|t|^\delta.\non\end{align} Combining the estimates
(\ref{dete})--(\ref{an2}) yields (\ref{hold}). As noted earlier,
this implies that $\Theta'\in C^{0,\delta}_{2\pi}$. This completes
the proof.

\end{proof}

\end{appendix}

 {\small
{\sc Acknowledgement.} The support of the Vienna Science and
Technology Fund (WWTF) is gratefully acknowledged. The authors thank
the referee for a useful suggestion concerning the regularity result
in Section 4.}

\providecommand{\href}[2]{#2}

\end{document}